\newtheorem{theoremalpha}{Theorem}
\newtheorem{theorem}{Theorem}[section]
\newtheorem*{theorem*}{Theorem}
\newtheorem{proposition}[theorem]{Proposition}
\newtheorem{lemma}[theorem]{Lemma}
\DeclareMathOperator{\RR}{\mathbb{R}}
\DeclareMathOperator{\vol}{Vol}
\DeclareMathOperator{\rank}{rank}
\newcommand{\modul}[1]{\left|#1\right|}
\newcommand{\norm}[1]{\left\lVert#1\right\rVert}
\newcommand{\dd}{\mathrm{d}}
\title{Equidistribution of horospheres in nonpositive curvature}
\author{Sergi Burniol Clotet\\
LPSM, Sorbonne Université, 4 Place Jussieu, 75005 Paris, France\\
(email: sergi.burniol\_clotet@upmc.fr)}
\date{June 23, 2020}
\begin{document}

%space between paragraphs
\setlength{\parskip}{0.5ex plus 0.5ex minus 0.2ex}
\maketitle

\begin{abstract}
	We study the ergodic properties of horospheres on rank $1$ manifolds with nonpositive curvature. We prove that the horospheres are equidistributed under the action of the geodesic flow towards the Bowen-Margulis measure, on a large class of manifolds.
	In the case of surfaces, we define a parametrization of the horocyclic flow on the set of horocycles containing a rank $1$ vector that is recurrent under the action of the geodesic flow. We prove that the horocyclic flow in restriction to this set is uniquely ergodic.
	The results are valid for large classes of manifolds, including the compact ones.
\end{abstract}

Key words: equidistribution, unique ergodicity, horocyclic flow, geodesic flow, nonpositive curvature, rank $1$ manifolds

2020 Mathematics Subject Classification: 37D40 (Primary); 37A25, 53D25 (Secondary)

\section{Introduction}
Horocyclic flows associated to a geodesic flow have been extensively studied on compact surfaces with constant negative curvature \cite{Furstenberg1}, and later on compact surfaces with variable negative curvature \cite{Marcus1,Marcus77}. They are uniquely ergodic and mixing, and they have zero topological entropy, among other properties. More generally, on negatively curved compact manifolds of any dimension, the Bowen-Margulis measure is the unique measure invariant under the unstable foliation, and all the horospheres are equidistributed towards this measure \cite{Roblin03}. In this paper, we have two goals: first, we produce a result on the equidistribution of horospheres for rank $1$ manifolds with nonpositve curvature; and second, for the case of surfaces, we prove the unique ergodicity of the horocyclic flow restricted to a well-chosen subset of rank $1$ vectors.

M. Babillot gave a simple proof of the mixing property of the geodesic flow and showed the equidistribution of horospheres under the action of this flow towards certain product measures for manifolds with negative curvature \cite{Babillot1}. For the Bowen-Margulis measure, the equidistribution of horospheres can be stated as follows.

\begin{theorem*} \cite[Theorem 3]{Babillot1}
	Let $M$ be a non-elementary complete connected Riemannian manifold with negative curvature bounded away from $0$. Assume that the geodesic flow $g_t$ on the unitary tangent bundle $T^1M$ of $M$ is topologically mixing on the set of nonwandering vectors, and that the Bowen-Margulis measure $\mu$ is finite. Then, for every unstable horosphere $H\subset T^1M$, every open subset $U$ of $H$ containing a nonwandering vector is equidistributed under the action of the geodesic flow; i.e. for every bounded and uniformly continuous function $f$ on $T^1M$, we have
	\begin{equation*}
	\frac{1}{\mu_{H}(U)}\int_U {f}\circ g_t \, \dd \mu_{H} \xrightarrow[t \to +\infty]{} \frac{1}{\mu(T^1 M)}\int_{T^1 M}f \,\dd \mu, 
	\end{equation*}
	where $\{\mu_H\}_H$ are the conditional measures of the Bowen-Margulis measure $\mu$ along the unstable foliation.
\end{theorem*}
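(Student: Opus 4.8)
\medskip
\noindent\textbf{Proof sketch.} The plan is to deduce the equidistribution from two ingredients: the measure-theoretic \emph{mixing} of the geodesic flow $(g_t,\mu)$, and the local product structure of the Bowen--Margulis measure. I take the mixing as the starting point; under the present hypotheses it is the companion theorem of Babillot, obtained from topological mixing and finiteness of $\mu$ by a Hopf-type argument that excludes any nontrivial equicontinuous factor of the flow (equivalently, an arithmetic length spectrum), and whose proof is independent of the equidistribution statement. The second ingredient is the classical fact that, in Hopf coordinates $(v^-,v^+,s)$ and near any nonwandering vector, $\mu$ disintegrates as $\dd\mu\asymp\dd\mu^{su}\,\dd\mu^{ss}\,\dd s$, a product of the strong-unstable Patterson--Sullivan conditional $\mu^{su}$ (which along the leaf $H$ is the measure $\mu_H$), the strong-stable conditional $\mu^{ss}$, and Lebesgue measure along the flow direction, the Radon--Nikodym cocycles being H\"older and tending uniformly to $1$ as the coordinate box shrinks.

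Fix a bounded uniformly continuous $f$ with modulus of continuity $\omega_f$, write $\mu(T^1M)$ for the total mass, fix a nonwandering $v\in U$ and a small $\epsilon>0$. I first treat a Borel piece $A\subset H$ of diameter less than $\epsilon$ with $v\in A$ and $\mu_H(A)>0$ (such $A$ exists since $v$ nonwandering forces $v^-$ into the limit set, hence $v\in\operatorname{supp}\mu_H$), and form the weak-stable thickening
\[
\mathcal E_\epsilon \;=\; \bigcup_{u\in A}\ \bigcup_{|s|<\epsilon} g_s\big(W^{ss}_\epsilon(u)\big),
\]
a Borel set of positive finite $\mu$-measure. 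Because strong-stable plaques are uniformly exponentially contracted by $g_t$ (this is where the curvature being bounded away from $0$ enters), for $t$ large $f\circ g_t$ differs, in sup norm on $\mathcal E_\epsilon$, by $\omega_f\big(Ce^{-at}\epsilon\big)\to0$ from the function $g_sw\mapsto f(g_{t+s}u)$ that is constant along the strong-stable plaques. Unfolding via the product structure then gives, for $t$ large,
\[
\int_{\mathcal E_\epsilon} f\circ g_t\,\dd\mu \;=\; \bigl(1+o_\epsilon(1)\bigr)\,\mu^{ss}\!\bigl(W^{ss}_\epsilon\bigr)\int_{-\epsilon}^{\epsilon}\!\int_A f\circ g_{t+s}\,\dd\mu_H\,\dd s \;+\; o_t(1),
\]
while mixing yields $\int_{\mathcal E_\epsilon}f\circ g_t\,\dd\mu\to\mu(\mathcal E_\epsilon)\,\mu(f)/\mu(T^1M)$ with $\mu(\mathcal E_\epsilon)=(1+o_\epsilon(1))\,\mu^{ss}(W^{ss}_\epsilon)\,\mu_H(A)\,2\epsilon$. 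Dividing, the $s$-average $\tfrac{1}{2\epsilon}\int_{-\epsilon}^{\epsilon}\tfrac{1}{\mu_H(A)}\int_A f\circ g_{t+s}\,\dd\mu_H\,\dd s$ tends, as $t\to+\infty$, to $\mu(f)/\mu(T^1M)$ up to an additive error $o_\epsilon(1)$.

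To remove the average in $s$, observe that $t\mapsto\tfrac{1}{\mu_H(A)}\int_A f\circ g_t\,\dd\mu_H$ has modulus of continuity at most $\omega_f$, since $d(g_sw,w)\le|s|$; hence it stays within $\omega_f(\epsilon)$ of its average over $(-\epsilon,\epsilon)$, so $\limsup_{t\to+\infty}\big|\tfrac{1}{\mu_H(A)}\int_A f\circ g_t\,\dd\mu_H-\mu(f)/\mu(T^1M)\big|\le\omega_f(\epsilon)+o_\epsilon(1)$. For a general open $U\ni v$ I use that $\mu_H$ is Radon and carried by the nonwandering part of $H$: choose a compact $K\subset U$ with $\mu_H(U\setminus K)$ as small as desired, partition $K$ into finitely many Borel pieces of diameter less than $\epsilon$, each of positive $\mu_H$-measure and meeting $\operatorname{supp}\mu_H$, apply the previous estimate to each piece, and recombine by linearity after normalizing by $\mu_H(U)$. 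Letting first $t\to+\infty$, then $\epsilon\to0$ and $K\uparrow U$ gives the stated convergence. The main obstacle is the mixing input: once $(g_t,\mu)$ is known to be mixing the thickening argument is essentially soft, needing only uniform control of the Radon--Nikodym cocycles and of the contraction rate of $g_t$ along strong-stable plaques (both furnished by the curvature assumption); but upgrading topological mixing to measure-theoretic mixing is precisely where the geometry — the absence of an arithmetic length spectrum, i.e. of an equicontinuous factor — has to be exploited, and it is the delicate point.
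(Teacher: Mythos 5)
Your proposal is correct and takes essentially the same route as the paper's (i.e.\ Babillot's) argument: thicken a small positive-$\mu_H$-measure piece of the horosphere into a weak-stable/flow box, compare the horospherical and box averages via the product structure of $\mu$ with Radon--Nikodym (Busemann) factors controlled to within $1\pm\varepsilon$ and with $f\circ g_t$ nearly constant along the stable thickening for all $t\ge 0$, invoke mixing of $(g_t,\mu)$, and then globalize using that $\mu_H$ is Radon, a compact exhaustion of $U$, and a finite partition into small pieces. The only differences are cosmetic: the paper folds the flow-time shift $s$ directly into the uniform-continuity estimate instead of removing the $s$-average in a separate step, and it only uses that distances along (weak) stable leaves do not increase under the forward flow, so the exponential contraction you invoke is not actually needed.
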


We refer the reader to \cite{PaulinPollicottSchapira} for the so-called Patterson-Sullivan construction of the Bowen-Margulis measure in negative curvature. Several criteria for the finiteness of this measure are given in \cite{PitSchapira}.

The Bowen-Margulis measure can be generalized to  nonpositively curved rank $1$ manifolds. G. Knieper constructed this measure in \cite{Knieper98}, following the method pioneered by Patterson and Sullivan \cite{Patterson76,Sullivan79}, and proved that it is the unique measure of maximal entropy when the manifold is compact. In this article, we explain an optimal way to generalize the equidistribution of horospheres towards Bowen-Margulis for nonpositively curved rank $1$ manifolds, following the approach of Babillot.

In the equidistribution theorem, we consider the averages of a function with respect to a measure $\mu_H$, which we will define in Section \ref{Sec:notation}, associated to the horocycle $H$. On a negatively curved manifold, if $U$ is an open subset of $H$ containing a nonwandering vector, the $\mu_H$-measure of $U$ is positive, so it makes perfect sense to average a function over $U$. However, on manifolds of nonpositive curvature, not every open subset of a horocycle containing a nonwandering vector has positive measure. As an example, we take a nonflat surface containing a flat cylinder (Figure \ref{fig:pic1}). All the vertical vectors with base point in a longitudinal segment of the cylinder are in the same unstable horocycle. The set $U$ formed by these vectors has zero $\mu_H$-measure, which is clear from its construction, although each vector of $U$ is periodic and, in particular, nonwandering. 

\begin{figure}[h]
	\centering
	\includegraphics{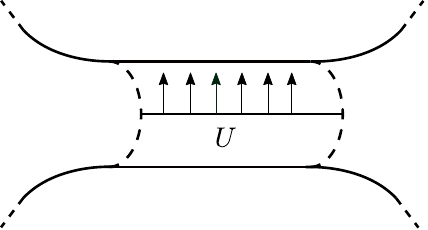}
	\caption{ \label{fig:pic1}
		A surface with a flat cylinder.}
\end{figure}

Theorem \ref{theorem_timeed} shows that, under certain hypothesis, an open subset $U$ of a horosphere $H$ is equidistributed in time, as soon as $U$ has positive $\mu_H$-measure. We emphasize that rank $1$ compact manifolds with nonpositive curvature satisfy the hypothesis, so there is equidistribution.

\begin{theoremalpha}\label{theorem_timeed}
	Let $M$ be a nonpositively curved non-elementary complete connected Riemannian manifold with a closed rank $1$ geodesic. Assume that the geodesic flow $g_t$ on the unitary tangent bundle $T^1M$ of $M$ is topologically mixing on the set of nonwandering vectors, and that the Bowen-Margulis measure $\mu$ is finite. Then, for every horosphere $H\subset T^1 M$ containing a nonwandering vector, every open subset $U$ of $H$ of finite but positive $\mu_H$-measure is equidistributed under the action of the geodesic flow; i.e. for every bounded and uniformly continuous function $f$ on $T^1M$, we have
	\begin{equation*}
	\frac{1}{\mu_{H}(U)}\int_U {f}\circ g_t \, \dd \mu_{H} \xrightarrow[t \to +\infty]{} \frac{1}{\mu(T^1 M)}\int_{T^1 M}f \,\dd \mu .
	\end{equation*}
\end{theoremalpha}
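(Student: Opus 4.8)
\medskip
\noindent\textbf{Proof strategy.} The plan is to adapt Babillot's argument for the equidistribution of horospheres to the nonpositively curved setting; the two engines are the mixing of the Bowen--Margulis measure $\mu$ under $g_t$ and the local product structure of $\mu$ near $\operatorname{supp}\mu$. Normalize so that $\mu(T^1M)=1$ and write $\bar\mu(f)=\int_{T^1M}f\,\dd\mu$. Recall from Section~\ref{Sec:notation} that, under the hypotheses (topological mixing on the nonwandering set and $\mu(T^1M)<\infty$), $(g_t,\mu)$ is mixing, so $\int_{T^1M}(\phi\circ g_t)\,\psi\,\dd\mu\to\bar\mu(\phi)\,\bar\mu(\psi)$ as $t\to+\infty$ for all bounded uniformly continuous $\phi,\psi$. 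The first step is to reduce to a local statement. Since $\mu_H$ is a Radon measure on $H$ with $0<\mu_H(U)<\infty$, write $\mathbf 1_U$ as the increasing limit of functions $w_n\in C_c(H)$ with $0\le w_n\le\mathbf 1_U$ and $\int w_n\,\dd\mu_H\uparrow\mu_H(U)$; the bound $\bigl|\int(\mathbf 1_U-w_n)(f\circ g_t)\,\dd\mu_H\bigr|\le\|f\|_\infty\bigl(\mu_H(U)-\int w_n\,\dd\mu_H\bigr)$, the splitting $f=f^+-f^-$, a finite cover of $\operatorname{supp}w_n$ by flow boxes with a subordinate partition of unity, and discarding the pieces supported where $\mu_H$ vanishes (as in the flat-cylinder example of the introduction) reduce everything to: for every $h\in C_c(H)$, $h\ge0$, supported in one flow box and with $\int h\,\dd\mu_H>0$, one has $\bigl(\int h\,\dd\mu_H\bigr)^{-1}\int h\,(f\circ g_t)\,\dd\mu_H\to\bar\mu(f)$.

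Fix such an $h$ and a vector $v_0\in\operatorname{supp}h\cap\operatorname{supp}\mu_H$. By the structure theory of Section~\ref{Sec:notation}, $\operatorname{supp}\mu_H\subset\operatorname{supp}\mu$ is made of rank~$1$ vectors near which the geodesic flow admits a product chart $B\cong P^u\times P^s\times(-\eta,\eta)$, $x\leftrightarrow(\xi,\zeta,r)$, with $P^u$ an open piece of $W^u_{\mathrm{loc}}(v_0)\subset H$, unstable plaques $\{(\xi,\zeta,r):\xi\in P^u\}$, each slice $\{(\xi,\zeta,r):(\zeta,r)\in T\}$ (where $T:=P^s\times(-\eta,\eta)$) contained in the weak-stable leaf $W^{cs}_{\mathrm{loc}}(\xi)$, and $x(\xi,\zeta_0,0)=\xi$, where $\zeta_0$ corresponds to $v_0$. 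Moreover $\mu|_B$ disintegrates over $T$ with a transverse measure $\tau$ and conditionals $\rho_{(\zeta,r)}\,\dd\mu^u$ on the plaques, where $\mu^u$ is the Patterson--Sullivan leaf measure, $\rho$ is continuous and positive on $B$, and $\mu_H$ agrees on $P^u$ with $\rho_{(\zeta_0,0)}\,\dd\mu^u$ up to a normalizing constant; since $v_0\in\operatorname{supp}\mu$, $\tau$ gives positive mass to every neighborhood of $(\zeta_0,0)$ in $T$. Replacing $h$ by $h\,\rho_{(\zeta_0,0)}\in C_c(P^u)$ (which turns $\int h\,\dd\mu_H$ into $\int h\,\dd\mu^u$ and $\int h(f\circ g_t)\,\dd\mu_H$ into $\int h(f\circ g_t)\,\dd\mu^u$), it suffices to show $\bigl(\int_{P^u}h\,\dd\mu^u\bigr)^{-1}\int_{P^u}h\,(f\circ g_t)\,\dd\mu^u\to\bar\mu(f)$.

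To this end, fix a small neighborhood $V$ of $(\zeta_0,0)$ in $T$ and $\chi\in C_c(V)$, $\chi\ge0$, $\int_T\chi\,\dd\tau=1$, and set $\Psi(\xi,\zeta,r)=h(\xi)\,\chi(\zeta,r)\,\rho_{(\zeta_0,0)}(\xi)^{-1}\in C_c(B)$. Writing $F_t(\zeta,r)=\int_{P^u}h(\xi)\,f\bigl(g_t\,x(\xi,\zeta,r)\bigr)\,\rho_{(\zeta,r)}(\xi)\rho_{(\zeta_0,0)}(\xi)^{-1}\,\dd\mu^u(\xi)$ and $c(\zeta,r)=\int_{P^u}h(\xi)\,\rho_{(\zeta,r)}(\xi)\rho_{(\zeta_0,0)}(\xi)^{-1}\,\dd\mu^u(\xi)$, the disintegration of $\mu$ gives
\begin{equation*}
\int_{T^1M}(f\circ g_t)\,\Psi\,\dd\mu=\int_T\chi\,F_t\,\dd\tau\quad\text{and}\quad\bar\mu(\Psi)=\int_T\chi\,c\,\dd\tau ,
\end{equation*}
so by mixing $\int_T\chi\,F_t\,\dd\tau\to\bar\mu(f)\int_T\chi\,c\,\dd\tau$ as $t\to+\infty$. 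The key point is that $F_t$ varies little over $V$ for large $t$: for $(\zeta,r)\in V$ the point $x(\xi,\zeta,r)$ lies in $W^{cs}_{\mathrm{loc}}(\xi)$, say in $W^s_{\mathrm{loc}}(g_{r'}\xi)$ with $|r'|\le\varepsilon(V)\to0$ as $V\downarrow\{(\zeta_0,0)\}$; since $W^s$ contracts under $g_t$ as $t\to+\infty$ and the geodesic spray has unit Sasaki norm (whence $d(g_tg_{r'}\xi,g_t\xi)\le|r'|$), one gets $\limsup_t d\bigl(g_t x(\xi,\zeta,r),g_t\xi\bigr)\le\varepsilon(V)$ uniformly in $\xi\in P^u$, $(\zeta,r)\in V$, and continuity of $\rho$ gives $\sup_{\xi,V}\bigl|\rho_{(\zeta,r)}(\xi)\rho_{(\zeta_0,0)}(\xi)^{-1}-1\bigr|\to0$. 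With the uniform continuity of $f$ this yields $\limsup_t\sup_V|F_t-F_t(\zeta_0,0)|\le\delta(V)\to0$, and likewise $\operatorname{osc}_V c\to0$. Since $F_t(\zeta_0,0)=\int_{P^u}h\,(f\circ g_t)\,\dd\mu^u$ and $c(\zeta_0,0)=\int_{P^u}h\,\dd\mu^u$ do not depend on $\chi$ or $V$, and $\int_T\chi\,\dd\tau=1$, combining the mixing limit with these oscillation bounds and letting $V\downarrow\{(\zeta_0,0)\}$ forces $F_t(\zeta_0,0)\to\bar\mu(f)\,c(\zeta_0,0)$; this is exactly the local statement, so the first step concludes.

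The dynamical core above --- testing mixing against a product function and using contraction along stable leaves to localize --- transposes Babillot's scheme essentially verbatim, and I expect it to present no real difficulty. The genuine work, and the place where nonpositive curvature intervenes, lies in the preliminaries: (i) showing that $\operatorname{supp}\mu$ (hence $\operatorname{supp}\mu_H$) sits inside a set of rank~$1$ vectors carrying honest flow boxes, Hopf coordinates and a continuous positive local density for Knieper's measure --- this is the step that rules out flats and higher-rank phenomena, and it is precisely what makes positive $\mu_H$-measure the correct hypothesis, replacing the requirement that $U$ contain a nonwandering vector, as the flat-cylinder example shows; (ii) deducing measure-theoretic mixing of $(g_t,\mu)$ from topological mixing on the nonwandering set together with finiteness of $\mu$; and (iii) the measure-theoretic bookkeeping of the first two steps on the possibly noncompact $M$ (Radon-ness and local finiteness of $\mu_H$, the disintegration formula, uniform unit speed of the geodesic spray). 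I expect (i) to be the main obstacle.
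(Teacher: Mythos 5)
Your reduction scheme and your local mixing step reproduce Babillot's argument, and the local step is essentially Proposition \ref{prop_local_ed} of the paper; that part is sound in outline. The genuine gap is the structural input you label (i) and defer: you assume that $\operatorname{supp}\mu$, hence $\operatorname{supp}\mu_H$, consists of rank $1$ vectors around which one has product flow boxes with Hopf coordinates and a continuous positive density. This is false in general. The support of $\mu$ is the whole nonwandering set $\Omega$, and $\Omega$ contains higher-rank vectors (in the flat-cylinder example of the introduction every vector is nonwandering, in particular the rank $2$ periodic vectors tangent to the core closed geodesics); likewise, a vector of a horocycle $H$ bounding a maximal flat strip has rank $\ge 2$ by the flat strip theorem and can perfectly well lie in $\operatorname{supp}\mu_H$, since its neighborhoods in $H$ project onto one-sided neighborhoods of its positive endpoint, which carry $\mu_{x_0}$-mass. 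Consequently your finite cover of $\operatorname{supp}w_n$ by product flow boxes need not exist, and discarding the ``pieces where $\mu_H$ vanishes'' does not dispose of the higher-rank part: what you actually need is that the rank $\ge 2$ vectors of $H$ form a $\mu_H$-null set, and that is a statement requiring proof, not a consequence of the structure theory you invoke.

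That null-set statement is exactly Lemma \ref{lemma2} of the paper, and its proof is where the nonpositive-curvature work happens: Lemma \ref{lemma1} (the horosphere of a rank $1$ recurrent vector consists only of rank $1$ vectors), the disjointness of the endpoint projections of $Rec^1$ and of the higher-rank set, the use of the closed rank $1$ geodesic together with ergodicity of $\mu$ to give the rank $1$ recurrent vectors full $\mu$-measure, and ergodicity of the $\Gamma$-action on $(\partial\tilde M,\mu_{x_0})$ to upgrade $Rec^1_+$ from positive to full $\mu_{x_0}$-measure. None of this appears in your proposal, and indeed you never use the hypothesis that $M$ has a closed rank $1$ geodesic --- a sign that the key lemma is missing. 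Once Lemma \ref{lemma2} is in place, the paper's endgame is close to your reduction: inner regularity of $\mu_H$ gives a compact set of rank $1$ vectors of almost full measure in $U$, one intersects it with $\Omega$ (the complement being $\mu_H$-negligible), covers it by the equidistributed neighborhoods furnished by Proposition \ref{prop_local_ed}, and disjointifies. So the fix is not a different strategy but the missing Lemmas \ref{lemma1} and \ref{lemma2}; as stated, your preliminary (i) cannot be proved because it is false.
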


In the case of a piece of horocycle $U$ with zero $\mu_H$-measure, it could be reasonable to wonder if there is equidistribution with respect to another measure giving a positive value to $U$, for instance, the Lesbesgue measure, which is very natural. The example of the surface with a flat cylinder shows that this is also not possible, because the geodesic flow acts periodically on the piece of horocycle $U$; then, for a well-chosen function $f$, the averages of $f\circ g_t$ on $U$ could oscillate endlessly.

In the second part of the article, we study the case of surfaces, which is easier to deal with as the horospheres can be parametrized by a flow.  The dynamical properties of horocyclic flows are well understood in some situations. For instance, on a negatively curved compact surface, the Bowen-Margulis measure is the unique probability measure invariant under the horocyclic flow \cite{Marcus1}. For geometrically finite manifolds, there is a classification of the Radon measures invariant under the horocyclic flow \cite[Corollary 6.5]{Roblin03}.

We follow Marcus's method, based on the definition of a parametrization of the horocyclic flow by the measures on the horocycles. Unfortunately, it is not possible to define an analogous parametrization on the whole space for rank $1$ surfaces with nonpositive curvature, due to the presence of flat regions. We avoid this difficulty, by restricting our system to the set $\Sigma$ of vectors whose horocycle contains a rank $1$ vector recurrent under the geodesic flow. Under the hypothesis of the theorem, this set has full Bowen-Margulis measure and is $G_\delta$-dense in the unitary tangent bundle. Thanks to the equidistribution of the horocycles of Theorem \ref{theorem_timeed} and a part of the strategy followed by Y. Coudène in \cite{Coudene1}, we prove the unique ergodicity of the horocyclic flow on $\Sigma$ for manifolds that satisfy the duality condition, which means that the nonwandering set of the geodesic flow is the whole unitary tangent bundle. Rank $1$ compact surfaces with nonpositive curvature are included.

\begin{theoremalpha}\label{unique_ergodi}
	Let $M$ be an orientable rank $1$ complete connected Riemannian surface with nonpositive curvature satisfying the duality condition. Assume that the Bowen-Margulis measure $\mu$ is finite. Then every finite Borel measure on $\Sigma$ invariant under the horocyclic flow $h_s$ is a constant multiple of the Bowen-Margulis measure $\mu|_\Sigma$ restricted to $\Sigma$.
\end{theoremalpha}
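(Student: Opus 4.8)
The plan is to follow Y. Coudène's strategy \cite{Coudene1} for passing from equidistribution to unique ergodicity, with Theorem~\ref{theorem_timeed} as the essential input. By the ergodic decomposition — and because $\Sigma$ is $h_s$-invariant, so that all components are carried by $\Sigma$ — it suffices to treat an $h_s$-ergodic probability measure $\nu$ on $\Sigma$ and to show $\nu=\mu|_\Sigma/\mu(T^1M)$ (recall $\mu$ gives full measure to $\Sigma$, so the right-hand side is a probability measure).

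The mechanism is the commutation of the two flows. On $\Sigma$ the horocyclic flow is parametrized by the conditional measures $\mu_H$, and since the geodesic flow multiplies these measures by a fixed exponential factor, there is a relation of the form $g_t\,h_s\,g_{-t}=h_{e^{\delta t}s}$ with $\delta>0$; in particular $g_{t*}\nu$ is again an $h_s$-invariant finite measure on $\Sigma$, using that $\Sigma$ is $g_t$-invariant (rank and recurrence are preserved by the geodesic flow). For a bounded uniformly continuous $f$ and any $a>0$, the $h_s$-invariance of $\nu$ gives
\begin{equation*}
\int_{T^1M}f\circ g_t\,\dd\nu=\int_{T^1M}\Big(\frac1a\int_0^a f(g_t h_s v)\,\dd s\Big)\dd\nu(v).
\end{equation*}
A sub-arc $\{h_s v:0<s<a\}$ of the horocycle of a point $v\in\Sigma$ is an open subset of finite and positive $\mu_H$-measure whose horocycle contains a nonwandering vector (under the duality condition all vectors are nonwandering), so Theorem~\ref{theorem_timeed} makes the inner average tend to $\frac1{\mu(T^1M)}\int f\,\dd\mu$ as $t\to+\infty$; dominated convergence then gives $g_{t*}\nu\to\mu/\mu(T^1M)$ weakly.

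The remaining, and main, task is to upgrade this to $\nu=\mu/\mu(T^1M)$. I would do this by showing that the horocyclic Birkhoff averages converge to the Bowen–Margulis average at $\nu$-generic points. Writing a long average at $v$ as $\frac1S\int_0^S f(h_sv)\,\dd s=\frac1a\int_0^a f\big(g_t h_\sigma(g_{-t}v)\big)\,\dd\sigma$ with $S=ae^{\delta t}$, one sees that Theorem~\ref{theorem_timeed} applies at once if the base point $g_{-t}v$ were held fixed — which happens, up to an error, exactly when the geodesic through $v$ returns close to itself. This is where the definition of $\Sigma$ enters: the horocycle of $v$ contains a rank $1$ vector recurrent under the geodesic flow, which a closing argument for rank $1$ vectors lets one approximate by periodic rank $1$ vectors whose horocycles approximate that of $v$. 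For a periodic vector $p$ of period $P$, taking $t=nP$ makes $g_{-nP}p=p$ exactly, so Theorem~\ref{theorem_timeed}, applied with $a$ ranging over a bounded interval to recover continuous time, gives $\frac1S\int_0^S f(h_sp)\,\dd s\to\frac1{\mu(T^1M)}\int f\,\dd\mu$; transferring this along the approximating sequence, using continuity of the horocyclic flow and of the horocycle foliation at rank $1$ vectors, yields the same limit at $v$. Together with the Birkhoff theorem for $(\nu,h_s)$ this forces $\int f\,\dd\nu=\frac1{\mu(T^1M)}\int f\,\dd\mu$ for every bounded uniformly continuous $f$, hence $\nu=\mu/\mu(T^1M)$.

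The hard part is precisely this last transfer. Because the horocyclic flow is not equicontinuous and the geodesic flow is expanding, one cannot pass from the fixed-base-point equidistribution of Theorem~\ref{theorem_timeed} to an individual orbit average by a crude continuity estimate; the argument must route through the recurrent rank $1$ vector built into $\Sigma$ together with a sufficiently uniform (shadowing-type) closing lemma. The non-compactness issues — no escape of mass for $g_{t*}\nu$ (automatic here, the limit being a probability) and the fact that $\nu$-almost every point lies in $\Sigma$ — must also be addressed, but are comparatively routine given the finiteness of $\mu$ and the standing hypotheses.
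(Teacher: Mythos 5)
Your reduction to an ergodic $\nu$, the identity $\frac1S\int_0^S f(h_sv)\,\dd s=\frac1a\int_0^a f\bigl(g_t h_\sigma(g_{-t}v)\bigr)\dd\sigma$ with $S=ae^{\delta t}$, and the observation that Theorem~\ref{theorem_timeed} settles everything \emph{if the base point $g_{-t}v$ were fixed}, all match the paper's strategy. But the step you yourself call ``the hard part'' --- passing from fixed-base-point equidistribution to the Birkhoff average along the actual orbit of $v$ --- is exactly the content that is missing, and the substitute you sketch does not work as stated. Approximating the recurrent rank~$1$ vector $w\in H^u(v)$ by periodic rank~$1$ vectors $p_k$ and invoking ``continuity of the horocyclic flow and of the horocycle foliation'' only controls the horocycles $H^u(p_k)$ and $H^u(v)$ on pieces of bounded length; the statement you need compares averages over pieces of length $S\to\infty$, and the equidistribution of $H^u(p_k)$ as $S\to\infty$ (for each fixed $k$) says nothing about $H^u(v)$ unless one can interchange the limits in $k$ and $S$. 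That interchange requires a uniform (in the base point) version of the convergence, which neither a closing lemma nor crude continuity supplies; gesturing at ``a sufficiently uniform (shadowing-type) closing lemma'' without formulating or proving it leaves the proof with a genuine gap.

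The paper closes this gap differently, and without periodic orbits. It proves that the normalized averages $M_1(f\circ g_t)$, written explicitly via the measures $\mu_{H}$ in coordinates at infinity, form an \emph{equicontinuous} family at every point of $\tilde\Sigma$ (Proposition~\ref{global_equicont}); Arzel\`a--Ascoli plus the pointwise convergence given by Theorem~\ref{theorem_timeed} then yields uniform convergence of $\bar M_1(f\circ g_t)$ to $\int f\,\dd\mu/\mu(T^1M)$ on compact subsets of $\Sigma$ (Proposition~\ref{uniform_convergence}). The moving base points are then tamed by the structure of $\Sigma$ itself: since on $\Sigma$ horocycles coincide with unstable manifolds (Lemma~\ref{lemma1}), the recurrence times $t_n$ of $w$ give $g_{-t_n}v\to w$, so all base points lie in the compact set $\{g_{-t_n}v\}_n\cup\{w\}\subset\Sigma$, and the uniform convergence there produces a sequence $R_n\to\infty$ with $\bar M_{R_n}(f)(v)\to\int f\,\dd\mu/\mu(T^1M)$; combined with the Birkhoff theorem for $\nu$ this identifies $\nu$. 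Your first step (weak convergence of $g_{t*}\nu$ to $\mu/\mu(T^1M)$ via dominated convergence) is correct but is not used and does not shortcut this uniformity issue. To repair your proposal you would need to prove an equicontinuity or locally uniform equidistribution statement of the type of Proposition~\ref{global_equicont}; as written, the argument is incomplete at its decisive point.
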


In this article, we only work with expanding horospheres and expanding horocyclic flows, but all the results have an analogy in the contracting setting.

\subsection*{Acknowledgement}

The author wishes to thank his Ph.D. advisor Yves Coudène for the many helpful suggestions.

%EU flag and the following sentence need to be displayed
\noindent
\begin{minipage}[c]{0.08 \linewidth}
	\includegraphics{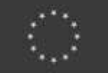}	
\end{minipage}
\begin{minipage}[c]{0.9\linewidth}
	This project has received funding from the European Union’s Horizon 2020 research and innovation program under the Marie Skłodowska-Curie grant agreement No. 754362.
\end{minipage}

\section{Measures on the horocycles and equidistribution}

\subsection{Notation}

Let $M$ be a complete connected Riemannian manifold of nonpositive curvature and denote by $g_t:T^1M\rightarrow T^1M$ the geodesic flow on the unitary tangent bundle $T^1M$ of $M$. Recall that the \emph{rank} of a vector $v$ in $T^1 M$,  denoted by $\rank v$, is the dimension of the parallel Jacobi fields along the geodesic tangent to $v$. There is always a parallel Jacobi field in the tangent direction of the geodesic, so the rank must be between $1$ and the dimension of $M$. We say that the manifold $M$ is of \emph{rank $1$} if it contains at least one vector of rank $1$. Let us start with some standard definitions and recall the most important facts about rank $1$ manifolds.

Most part of the reasoning takes place on the universal cover $\tilde{M}$ of $M$. We consider the Riemannian distance $d$ on $\tilde{M}$ and the distance $d_1$ associated to the Sasaki metric on $T^1\tilde{M}$. A \emph{geodesic ray} is, by definition, a map $\sigma : [0,+\infty)\rightarrow \tilde{M}$ that minimizes length. Two geodesic rays $\sigma_1,\sigma_2$ are \emph{asymptotic} if the distance $d(\sigma_1(t),\sigma_2(t))$ is uniformly bounded for all $t\ge0$. The \emph{boundary at infinity} $\partial\tilde{M}$ is the set of asymptotic classes of rays. We refer the reader to \cite{ballmannnonpositivebook} and \cite{ballmannlectures} for a better understanding of this construction. 

The \emph{Busemann cocycle} at $\xi$ in $\partial\tilde{M}$ between $x$ and $y$ in $\tilde{M}$ is defined as
\begin{equation*}
	\beta_\xi (x,y)= \lim_{t\to+\infty} d(x,\sigma(t)) - d(y,\sigma(t)),
\end{equation*}
where $\sigma$ is any ray in the class $\xi$. If $v$ is a vector in $T^1\tilde{M}$, their \emph{points at infinity} $v_-$ and $v_+$ in $\partial \tilde{M}$ are respectively the asymptotic classes of the negative and positive rays tangent to $v$.
We can define the \emph{(unstable) horosphere} of $v$ as the set
\begin{equation*}
	H^u(v)=\{w\in T^1 \tilde{M} \,|\, w_-=v_-,\, \beta_{v_-}(\pi(v),\pi(w))=0 \},
\end{equation*}
where $\pi:T^1\tilde{M}\rightarrow \tilde{M}$ is the projection to the base. The point $v_-$ in $ \partial \tilde{M}$ is called the \emph{center} of the horosphere $H^u(v)$. Horospheres are $C^1 $ submanifolds of $T^1\tilde{M}$ of dimension $\dim M -1$.

We also use the notation $\partial^2\tilde{M}=(\partial\tilde{M}\times\partial \tilde{M})\setminus\Delta$, where $\Delta$ is the diagonal of $\partial\tilde{M}\times\partial \tilde{M}$, and define the map 
\begin{equation*}
\begin{matrix}
P: & T^1 \tilde{M} & \longrightarrow & \partial^2 \tilde{M} \times \RR  \\
& v & \longmapsto & (v_-,v_+,\beta_{v_-}(x_0,\pi(v)).
\end{matrix}
\end{equation*}
It is known that this map is a homeomorphism when the curvature is negatively pinched, but both the injectivity and the surjectivity may fail in the context of nonpositively curved rank $1$ manifolds. Nevertheless, restricted to rank $1$ vectors, $P$ is still a homeomorphism onto its image \cite{ballmannlectures}.

In the sequel, we identify $M$ with the quotient of $\tilde{M}$ by some discrete subgroup of isometries $\Gamma$, which is isomorphic to the fundamental group of $M$. The \emph{limit set} $\Lambda(\Gamma)$ is the set of accumulation points of an orbit $\Gamma x_0$, $x_0\in \tilde{M}$, in $\tilde{M}\cup \partial\tilde{M}$. It does not depend on the choice of $x_0$ and it is contained in the boundary at infinity $\partial\tilde{M}$. We also define the \emph{nonwandering set} $\Omega$ to be the set $\{ v\in T^1 \tilde{M} \,|\, v_-,v_+\in \Lambda(\Gamma) \}$. The name of this set comes from the fact that its projection to $T^1M$ is the topological nonwandering set of the geodesic flow. In order to have some complexity in the geodesic flow, we ask that $\Gamma $ (or $M$) is \emph{non-elementary}, which means that the limit set $\Lambda(\Gamma)$ is infinite.

A $\delta$\emph{-dimensional conformal density}, $\delta \ge 0$, is a family of finite Borel measures $\{\mu_x\}_{x\in \tilde{M}}$ on $\partial\tilde{M}$ supported by the limit set $\Lambda(\Gamma)$ and such that any two measures $\mu_x,\mu_y$, where $x,y\in\tilde{M}$, are equivalent and satisfy the relation $\frac{\dd \mu_y}{\dd \mu_x}(\xi)=\exp{(-\delta \beta_\xi(y,x))}$. In addition, we say that such a family of measures is $\Gamma$\emph{-invariant} if $\gamma_*\mu_x=\mu_{\gamma x}$ for all $\gamma\in\Gamma,\,x\in\tilde{M}$. The Patterson construction provides examples of invariant $\delta(\Gamma)$-dimensional conformal densities, where $\delta(\Gamma)$ is the critical exponent of $\Gamma$ (see \cite{Knieper97} for example).
Henceforth, we will fix a point $x_0\in\tilde{M}$ and denote by $\mu_{x_0}$ an element of an invariant $\delta $-dimensional conformal density. 

\subsection{Definition of the Bowen-Margulis measure}\label{Sec:notation}

We now define a product measure on the unitary tangent bundle of $\tilde{M}$, which will pass to the quotient $T^1M$, as Knieper did in \cite{Knieper98} for a compact manifold. Consider the set of geodesic endpoints $E(\tilde{M}):=\{(v_-,v_+)\in \partial^2\tilde{M}\,|\,v\in T^1\tilde{M} \}$. For every $(\xi,\eta)\in E (\tilde{M})$, the set $\pi(P^{-1}(\{(\xi,\eta)\}\times \RR))$ is nonempty, and it has been shown to be a flat totally geodesic submanifold of $\tilde{M}$ \cite{Eberlein96}. In fact, it is either a single geodesic or a flat totally geodesic submanifold of dimension at least $2$. No matter what form the submanifold $\pi(P^{-1}(\{(\xi,\eta)\}\times \RR))$ takes, we denote by $\vol$ its induced volume measure.

Firstly, we define a measure $\bar{\mu}$ on $E(\tilde{M})$, which we extend to $\partial^2\tilde{M}$, by its density
\begin{equation*}
\dd \bar{\mu}(\xi,\eta)= e^{\delta (\beta_\xi(x_0,p_{\xi,\eta})+ \beta_\eta(x_0,p_{\xi,\eta}))}\dd \mu_{x_0}(\xi)\,\dd \mu_{x_0}(\eta),
\end{equation*}
where $p_{\xi,\eta}$ is any point in $\pi(P^{-1}(\{(\xi,\eta)\}\times \RR))$. The definition does not depend on the choices of $p_{\xi,\eta}$, and $\bar{\mu}$ is invariant under the diagonal action of $\Gamma$ on $\partial^2\tilde{M}$. 
Now, the measure $\mu$ on $T^1 \tilde{M}$ associated to $\mu_{x_0}$ gives the value

\begin{equation}\label{definition_measure}
	\mu(A)=\int_{\partial^2\tilde{M}} \vol(\pi(P^{-1}   (\{(\xi,\eta)\}\times \RR)\cap A))\,\dd \bar{\mu}(\xi,\eta)
\end{equation}
to a Borel subset $A\subset T^1\tilde{M}$. It is clear that this measure is both $\Gamma$ and $g_t$-invariant. The $g_t$-invariant measure obtained on the quotient $T^1 M$ will also be denoted by $\mu$, for simplicity of notation.

In the sequel, we assume that $M$ contains a closed rank $1$ geodesic. Many authors have studied the ergodic properties of this measure in this setting. G. Link and J. C. Picaud gave a version of the Hopf-Tsuji-Sullivan dichotomy (see \cite{LinkPicaud} and \cite{Link1}). In this article we will always be in the conservative case of the dichotomy: $\Gamma$ is of divergence type, the radial limit set has full $\mu_{x_0}$-measure and the system $(T^1M,g_t,\mu)$ is conservative and ergodic. Furthermore, in this case there is a unique conformal density $\mu_{x_0}$ and its dimension is the critical exponent $\delta=\delta (\Gamma)$ of $\Gamma$, this measure $\mu_{x_0}$ has no point masses, and the measure class of $\mu_{x_0}$ is ergodic under the action of $\Gamma$. We refer to $\mu$ as the Bowen-Margulis measure. Whenever $M$ is compact, it turns out that the Bowen-Margulis measure $\mu$ is the unique measure of maximal entropy up to a multiplicative constant as proved by G. Knieper in \cite{Knieper98}.

Let $\mathcal{H}$ be the set of unstable horospheres in $T^1 \tilde{M}$ and let $\Gamma $ act on $\mathcal{H}$. There is a simple identification of $\mathcal{H}$ by its point at infinity and the value of the Busemann cocycle: the map
\begin{equation*}
	\begin{matrix}
	\mathcal{H} & \longrightarrow & \partial \tilde{M} \times \RR  \\
	H^u(v) & \longmapsto & (v_-,\beta_{v_-}(x_0,\pi(v)).
	\end{matrix}
\end{equation*}
is bijective. This allows us to define a $\Gamma$-invariant measure $\hat{\mu}$ on the space of horospheres $ \mathcal{H}$ by the density
\begin{equation*}
	\dd \hat{\mu}(H^u(\xi,t))=e^{-\delta t}\dd \mu_{x_0}(\xi)\dd t,
\end{equation*}
where $H^u(\xi,t)$ is the unstable horosphere with coordinates $(\xi,t)\in \partial\tilde{M}\times \RR$.

We can now define a family $\{\mu_{H}\}_{H\in \mathcal{H}}$ of measures on each horosphere that has good properties, analogous to the negative curvature case. For each horosphere $H$, we consider the projection to the positive endpoint $P_H:H\rightarrow \partial \tilde{M}\setminus\{\xi\}$, where $\xi$ is the center of $H$. Let us treat a point as a $0$-manifold, for the sake of simplicity. For any vector $v$ in $T^1\tilde{M}$, the set of base points of vectors on the horosphere $H^u(v)$ that are also positively asymptotic to $v$, i.e. the set $\pi (P_{H^u(v)}^{-1}(v_+))$, is a totally geodesic submanifold of $\pi(P^{-1}((v_-,v_+)\times \RR))$. We denote its volume measure by $\vol$ with the convention that it is the delta measure when the submanifold consists of a single point. For each $\eta \in \partial \tilde{M}\setminus\{v_-\}$, we choose $w\in P_{H^u(v)}^{-1}   (\eta))$ and write $\phi_v(\eta)=e^{\delta \beta_{\eta}(x_0,\pi(w))}$, which in fact only depends on $\eta$, but not on $w$. The measure $\mu _{H^u(v)}$ assigns the value
\begin{equation*}
	\mu_{H^u(v)}(A)=\int_{\partial\tilde{M}\setminus\{v_-\}} \vol(\pi (P_{H^u(v)}^{-1}(\eta)\cap A))
	\phi_v(\eta)
	\,\dd \mu_{x_0}(\eta)
\end{equation*}
to a subset $A\subset H^u(v)$.

We enumerate the main properties of these measures that follow from the definition.
\begin{enumerate}[label={(\roman*)}]
	\item If $w\in H^u(v)$, then $\mu_{H^u(v)}=\mu_{H^u(w)}$. Hence, we can speak of a family $\{\mu_{H}\}_{H\in \mathcal{H}}$ of measures on each horosphere.
	\item They are $\Gamma$-invariant; i.e. for all $\gamma$ in $\Gamma$ and all $H$ in $\mathcal{H}$, we have $\gamma_*\mu_H=\mu_{\gamma H}$.
	\item They are exponentially expanded by the geodesic flow: $\mu_{g_tH}=e^{\delta t}(g_t)_*\mu_{H}$.
	\item The measure $\mu$ is the product of $\{\mu_{H}\}_{H\in \mathcal{H}}$ by $\hat{\mu}$: for all $A\subset T^1 \tilde{M}$,
	\begin{equation}\label{product_structure}
		\mu(A)=\int_{\mathcal{H}} \mu_H(A\cap H)\, \dd \hat{\mu}(H).
	\end{equation}
\end{enumerate}

For our purpose, we assume that the Bowen-Margulis measure $\mu$ on the space $T^1M$  is finite, hence the geodesic flow is conservative, according to the Poincaré recurrence theorem, and there is only one conformal density $\mu_{x_0}$. Our goal is to find an equidistribution result in the sense that the $\mu_H$-averages of functions on a horosphere $H$ tend to the $\mu$-averages on the whole space. The starting point is always the mixing property of the geodesic flow with respect to the Bowen-Margulis measure $\mu$. The next result says that this property is equivalent to the topological mixing of the geodesic flow on $\Omega$. We do not know if this equivalence has been stated in this generality, although it can be expected and the main part of the work is already published.

There is a third equivalent property related to the length of the closed geodesics, analogous to what happens in negative curvature. We define the \emph{rank $1$ length spectrum} as the set of lengths of rank $1$ closed geodesics. We say that the rank $1$ length spectrum is \emph{non-arithmetic} if the rank $1$ length spectrum generates a dense subgroup of $\RR$.

\begin{theorem}
	Let $M$ be a rank $1$ nonpositively curved non-elementary complete connected Riemannian manifold. Assume that the Bowen-Margulis measure $\mu$ is finite. Then the following are equivalent:
	
	\begin{enumerate}[label={(\roman*)}]
		\item The geodesic flow $g_t$ is topologically mixing on the nonwandering set $\Omega$.
		\item The geodesic flow $g_t$ is mixing with respect to the Bowen-Margulis measure $\mu$.
		\item The rank $1$ length spectrum is non-arithmetic.
	\end{enumerate}
\end{theorem}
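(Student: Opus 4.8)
The plan is to prove the cycle of implications $(\mathrm{ii})\Rightarrow(\mathrm{i})\Rightarrow(\mathrm{iii})\Rightarrow(\mathrm{ii})$. The first implication is soft; the equivalence $(\mathrm{i})\Leftrightarrow(\mathrm{iii})$ is dynamical and runs parallel to classical arguments in negative curvature; and $(\mathrm{iii})\Rightarrow(\mathrm{ii})$ is the analytic core, which I would obtain by adapting Babillot's proof \cite{Babillot1} to Knieper's measure \cite{Knieper98}. Two facts are used throughout. First, under the divergence-type and conservativity hypotheses recalled above, the support of $\mu$ in $T^1M$ is exactly the image of the nonwandering set $\Omega$ (a consequence of the definition (\ref{definition_measure}): $\bar\mu$ has full support in the set of pairs of limit points joined by a geodesic, and the induced volume is fully supported in each flat), so $g_t|_\Omega$ is topologically transitive — an ergodic flow of full support is. Second, the rank $1$ vectors form a set of full $\mu$-measure on which $P$ is a homeomorphism; this is what makes the local product structure, the uniform expansion and contraction, and the shadowing and closing of orbits available near $\mu$-a.e.\ vector, and it is what lets closed rank $1$ geodesics detect the asymptotics of the dynamics on $\Omega$.

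$(\mathrm{ii})\Rightarrow(\mathrm{i})$: a finite measure-preserving mixing flow is topologically mixing on the support of its measure. Indeed, for nonempty relatively open $U,V\subseteq\Omega$ we have $\mu(U),\mu(V)>0$, and by mixing $\mu(g_tU\cap V)=\mu(U\cap g_{-t}V)\to\mu(U)\mu(V)/\mu(T^1M)>0$ as $t\to+\infty$, so $g_tU\cap V\neq\emptyset$ for all large $t$.

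$(\mathrm{i})\Rightarrow(\mathrm{iii})$: I argue by contraposition. Suppose the rank $1$ length spectrum generates the discrete group $c\mathbb{Z}$ with $c>0$. The plan is to construct a continuous $g_t$-equivariant map $F\colon\Omega\to\RR/c\mathbb{Z}$ with $F\circ g_t=F+t$ — equivalently, to exhibit $\Omega$ as a suspension over a $\mathbb{Z}$-action with constant roof $c$. One defines $F$ first on the dense set of rank $1$ periodic orbits (dense in $\Omega$ by the existence of a closed rank $1$ geodesic together with non-elementarity): on an orbit of length $\ell\in c\mathbb{Z}$ there is a natural angular coordinate in $\RR/\ell\mathbb{Z}$, hence in $\RR/c\mathbb{Z}$, determined up to an additive constant, and these constants are pinned down coherently because the closing lemma near rank $1$ vectors produces "connecting times" between nearby rank $1$ periodic orbits that lie in $c\mathbb{Z}$ up to arbitrarily small error. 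One then extends $F$ to $\Omega$ by uniform continuity. Since $F\circ g_t=F+t$ and $F$ is onto $\RR/c\mathbb{Z}$, for disjoint open arcs $I,J\subseteq\RR/c\mathbb{Z}$ the sets $F^{-1}(I)$ and $F^{-1}(J)$ are nonempty open subsets of $\Omega$ with $g_{nc}F^{-1}(I)\cap F^{-1}(J)=F^{-1}(I)\cap F^{-1}(J)=\emptyset$ for every $n\in\mathbb{Z}$, so $g_t$ is not topologically mixing on $\Omega$. (The converse $(\mathrm{iii})\Rightarrow(\mathrm{i})$ also admits a direct proof: non-arithmeticity provides two closed rank $1$ geodesics of incommensurable lengths, and shadowing orbits that wind around each of them in turn realizes every sufficiently large transit time between prescribed open sets of $\Omega$.)

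$(\mathrm{iii})\Rightarrow(\mathrm{ii})$ is the main obstacle, and here I would adapt Babillot's argument \cite{Babillot1}. Suppose $g_t$ is not mixing for $\mu$; pick $t_n\to+\infty$ along which some correlation fails to converge to the product, and analyze the subsequential weak-$*$ limits of the associated self-joinings of $(T^1M,g_t,\mu)$ — equivalently, via the product decomposition (\ref{product_structure}), of the pushed conditional measures $(g_{t_n})_*(\varphi\,\mu_H)$. Because $g_t$ contracts stable horospheres while expanding unstable ones, and the shift by $t_n$ asymptotically loses track of the unstable coordinate, any such limit is a $(g_t\times g_t)$-invariant measure on $\Omega\times\Omega$ concentrated on pairs that coincide along a common strong unstable leaf up to a flow shift $\tau$; ergodicity of $g_t$ and $\Gamma$-invariance then force $\tau$ to range over the closed subgroup of $\RR$ generated by the lengths of the closed rank $1$ geodesics. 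By $(\mathrm{iii})$ that subgroup is all of $\RR$, so every such limit joining equals $\tfrac1{\mu(T^1M)}\,\mu\otimes\mu$, which is mixing. The work lies exactly where it does in \cite{Babillot1}: establishing the tightness and the precise "coincidence up to a shift" structure of the weak limits (finiteness of $\mu$ ruling out escape of mass) and pinning the range of $\tau$ down to the rank $1$ length group. What makes the adaptation to nonpositive curvature go through is precisely the second standing fact above: $P$ is a homeomorphism on the full-measure set of rank $1$ vectors, which carry the hyperbolic behaviour Babillot's scheme needs, even though $M$ is not globally hyperbolic. For cocompact actions this implication is also contained in work of Ricks.
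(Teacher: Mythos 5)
Your cycle $(\mathrm{ii})\Rightarrow(\mathrm{i})\Rightarrow(\mathrm{iii})\Rightarrow(\mathrm{ii})$ is the same as the paper's, and two of the three steps match: $(\mathrm{ii})\Rightarrow(\mathrm{i})$ is exactly the support argument, and for $(\mathrm{iii})\Rightarrow(\mathrm{ii})$ the paper, like you, defers to Babillot's proof of mixing \cite{Babillot1}, only remarking that all of it goes through for a rank $1$ manifold with finite Bowen--Margulis measure provided the final appeal to compactness is replaced by the non-arithmeticity hypothesis; your joining-flavoured sketch of that adaptation is acceptable at the same level of detail as the paper's.

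The genuine gap is in $(\mathrm{i})\Rightarrow(\mathrm{iii})$. You argue by contraposition and propose to build a continuous equivariant factor $F\colon\Omega\to\RR/c\mathbb{Z}$ with $F\circ g_t=F+t$, defined first on rank $1$ periodic orbits and then extended by uniform continuity. Every step of that construction is asserted rather than proved, and each is nontrivial in nonpositive curvature: the density of rank $1$ periodic vectors in $\Omega$ under the present hypotheses needs a proof or a precise citation; the coherence of the additive constants rests on the claim that ``connecting times'' between nearby periodic orbits lie in $c\mathbb{Z}$ up to small error, which requires a concatenation/specification-type argument producing \emph{closed rank $1$} geodesics whose lengths decompose as the sums of those times --- and specification is exactly what fails outside the uniformly hyperbolic setting, the only available tool being the rank $1$ closing lemma; finally, uniform continuity of $F$ on the periodic set and the extension to all of $\Omega$ (including vectors lying on flat strips, where $P$ is not injective) are not addressed. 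None of this machinery is needed: the implication admits a short direct proof, which is what the paper does. Since the rank $1$ set is open \cite{ballmannlectures}, fix a closed ball $B$ of rank $1$ vectors; for a given $\varepsilon>0$ the rank $1$ closing lemma \cite[Proposition 4.5.15]{Eberlein96} yields $T_0,\delta>0$ such that any $v\in B$ with $d_1(v,g_tv)\le\delta$, $t\ge T_0$, is $\varepsilon$-shadowed by a periodic rank $1$ vector of period in $[t-\varepsilon,t+\varepsilon]$. Taking a nonempty open $U\subset\Omega\cap B$ of diameter less than $\delta$, topological mixing gives $U\cap g_tU\neq\emptyset$ for all $t\ge T$, hence closed rank $1$ geodesics with lengths in every interval $[t-\varepsilon,t+\varepsilon]$, $t\ge T$; as $\varepsilon$ is arbitrary, the rank $1$ length spectrum generates a dense subgroup of $\RR$ (this is the classical argument of \cite{Dalbo00} transposed to the rank $1$ setting). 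I recommend replacing your contrapositive construction by this direct argument, or else supplying full proofs of the density, coherence and extension claims on which your map $F$ depends.
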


\begin{proof}
	(ii)$\implies$ (i) The mixing property with respect to a measure implies the topological mixing on the support of the measure. In our case, the support of $\mu$ is the nonwandering set $\Omega$, so the implication is proved.
	
	(i)$\implies$ (iii) We reproduce the reasoning used in negative curvature \cite{Dalbo00}. Since the set of rank $1$ vectors is open \cite{ballmannlectures},
	 we can find a closed ball $B$ of certain radius only containing rank $1$ vectors. Let $\varepsilon >0$ be a given number. We apply the closing lemma for the rank $1$ set \cite[Proposition 4.5.15]{Eberlein96}: there exists constants $T_0>0$ and $\delta>0$ such that, for every $v\in B$ and $t\ge T_0$ with $d_1(v,g_t(v))\le \delta$, there exists a periodic rank $1$ vector $v'$ at distance $d_1(v,v')\le \varepsilon$, where the period $t'$ of $v$ satisfies $\modul{t-t'}<\varepsilon$.
	
	There exists a nonempty open subset $U$ of $\Omega$ of diameter smaller than $\delta$ and such that $U\subset B$. Since the geodesic flow on $\Omega$ is topologically mixing, there exists a number $T\ge T_0$ such that for all $t\ge T$, we have $U\cap g_t(U)\not= \emptyset$. In particular, there is a rank $1$ vector $v$ in $B$ satisfying $d_1(v,g_t(v))\le \delta $. Hence, for each $t \ge T$, there exists a periodic rank $1$ vector of period in $[t-\varepsilon,t+\varepsilon]$. Since $\varepsilon$ is arbitrary, this proves that the rank $1$ length spectrum is non-arithmetic.
	
	(iii)$\implies$ (ii) This implication may be the hardest, but it is essentially done in the proof of Theorem 2 in \cite{Babillot1}, asserting that the geodesic flow is mixing with respect to $\mu$ on a compact manifold. All the arguments work for a rank $1$ manifold with finite Bowen-Margulis measure, but at the end, instead of applying the compactness, we can use the assumption of non-arithmeticity of the length spectrum.
\end{proof}

To summarize, in all statements $M$ is a non-elementary nonpositively curved complete connected manifold with a closed geodesic of rank $1$ such that the geodesic flow is topologically mixing on $\Omega$ and such that the Bowen-Margulis measure $\mu$ is finite. 

\subsection{Equidistribution of horocycles}

We start with a local result showing that near rank $1$ vectors there is equidistribution: for a function $f:T^1M\rightarrow \RR$, the average on a horosphere of its lift $\tilde{f}:T^1\tilde{M}\rightarrow \RR$ pushed by the geodesic flow converges to the average of $f$ with respect to the Bowen-Margulis measure.

\begin{proposition}\label{prop_local_ed}
	Let $M$ be a nonpositively curved non-elementary complete connected Riemannian manifold with a closed rank $1$ geodesic. Assume that the geodesic flow $g_t$ on $T^1M$ is topologically mixing on $\Omega$ and that the Bowen-Margulis measure $\mu$ is finite. Then, for every rank $1$ vector $v\in \Omega \subset T^1 \tilde{M}$, there exists an open subset $U$ of $H^u(v)$ containing $v$ which is equidistributed under the action of the geodesic flow; i.e. for every bounded and uniformly continuous function $f$ on $T^1 M$ and every Borel neighborhood $V\subset U$ of $v$ we have
	\begin{equation*}
		\frac{1}{\mu_{H^u(v)}(V)}\int_V \tilde{f}\circ g_t \, \dd \mu_{H^u(v)} \xrightarrow[t \to +\infty]{} \frac{1}{\mu(T^1 M)} \int_{T^1 M}f \,\dd \mu .
	\end{equation*}
	
\end{proposition}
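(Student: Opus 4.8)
The plan is to reduce the equidistribution of a piece of horosphere to the mixing property of the geodesic flow, via a "thickening" argument in the spirit of Babillot. Fix a rank $1$ vector $v\in\Omega\subset T^1\tilde M$. Since the set of rank $1$ vectors is open and $P$ restricted to rank $1$ vectors is a homeomorphism onto its image, I can choose a small product neighborhood $W$ of $v$ in $T^1\tilde M$ of the form $W\cong B_-\times B_+\times I$ in the coordinates $(w_-,w_+,\beta_{w_-}(x_0,\pi(w)))$, consisting only of rank $1$ vectors, on which the local product structure of $\mu$ (equation \eqref{definition_measure}) and the local product structure \eqref{product_structure} of $\mu$ over the horospheres are both available and uniformly comparable to products of the factor measures. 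Let $U=W\cap H^u(v)$ and let $V\subset U$ be a Borel neighborhood of $v$ inside $U$. The set $g_t(V)$ is a piece of the horosphere $g_tH^u(v)$, and by property (iii) of the measures $\{\mu_H\}$ it carries mass $e^{\delta t}$ times the push-forward, so the normalized averages $\frac{1}{\mu_{H^u(v)}(V)}\int_V\tilde f\circ g_t\,\dd\mu_{H^u(v)}$ make sense.

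The core step is the following. For a small parameter $\rho>0$, consider the thickened set $V_\rho$ obtained by flowing $V$ for times in $(-\rho,\rho)$ and spreading in the stable/center directions inside $W$, i.e. a Borel box in $T^1\tilde M$ whose horospherical slices are (essentially) translates of $V$ and whose transverse measure is concentrated in a $\rho$-neighborhood of the slice through $v$. Using the product structures, one gets $\mu(V_\rho)\approx 2\rho\cdot c(v)\cdot\mu_{H^u(v)}(V)$ up to an error controlled by the modulus of continuity of the conformal density and the Busemann functions on $W$, and uniformly for $t$ large one has, for the lifted indicator sets, an approximate identity
\begin{equation*}
\int_{V_\rho}\tilde f\circ g_t\,\dd\mu \;\approx\; \Big(\int_{-\rho}^{\rho}e^{\delta s}\,\dd s\Big)\, c(v)\int_V \tilde f\circ g_t\,\dd\mu_{H^u(v)} + (\text{error}),
\end{equation*}
the point being that $g_t$ moves the whole box $V_\rho$ essentially along the horospherical direction, so that $\tilde f\circ g_t$ is nearly constant on the transverse fibers once $t$ is large (here one uses uniform continuity of $f$ and the fact that nearby rank $1$ geodesics that are forward asymptotic converge). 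Then I pass to the quotient: $V_\rho$ projects to a set of finite $\mu$-measure in $T^1M$, and mixing of $g_t$ with respect to $\mu$ (guaranteed by the Theorem, since topological mixing on $\Omega$ is among its equivalent conditions) gives $\frac{1}{\mu(V_\rho)}\int_{V_\rho}f\circ g_t\,\dd\mu\to\frac{1}{\mu(T^1M)}\int_{T^1M}f\,\dd\mu$. Dividing the two displayed approximations and letting first $t\to+\infty$ and then $\rho\to0$ yields the claim; the constants $c(v)$ and $\int e^{\delta s}\dd s$ cancel because they appear in both numerator and denominator.

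Two points need care. First, the passage from $\tilde M$ to $M$: the box $V_\rho$ may wrap around under $\Gamma$, but since it is relatively compact and $f$ is bounded one can either take $W$ small enough that $V_\rho$ injects into $T^1M$, or work with the $\Gamma$-periodization and use that $f$ is the lift of a function on $T^1M$; either way the mixing statement is applied downstairs. Second, and this is where I expect the main obstacle, one must justify that flowing for large time $t$ the transverse (stable and flow) directions of $V_\rho$ collapse onto the horospherical slice in a controlled way, so that the "approximately constant on fibers" claim is legitimate with errors that vanish as $\rho\to0$ uniformly in $t$; in nonpositive curvature one does not have exponential contraction along stable manifolds, but near a rank $1$ vector the forward geodesics are strongly asymptotic enough that $d_1(g_tw,g_tw')\to0$ as $t\to+\infty$ for $w,w'$ in the same strong stable leaf, and on the compact box $W$ this convergence can be made uniform, which suffices together with the uniform continuity of $f$. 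This is precisely why the statement is local (near a rank $1$ vector) rather than global: away from rank $1$ vectors the transverse structure degenerates and the argument breaks, as the flat-cylinder example in the introduction illustrates.
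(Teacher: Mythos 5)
Your overall strategy --- thicken the piece of horosphere into a box, compare averages through the product structure of $\mu$, and invoke mixing --- is the same as the paper's, and most of the reduction is sound. The genuine problem is your justification of the key ``approximately constant on fibers'' step. The transverse fibers of your box $V_\rho$ lie in the weak stable direction: vectors with the same forward endpoint but different backward endpoint and flow parameter. These are not strong stable leaves, and in nonpositive curvature it is \emph{not} true in general that $d_1(g_t w, g_t w')\to 0$ for two vectors on the same stable horosphere (asymptotic geodesics need not be strongly asymptotic, even along rank $1$ geodesics), nor is the uniformity over the box that you assert automatic where pointwise convergence does hold. Fortunately, no contraction is needed at all: by convexity of the distance between asymptotic geodesics, $t\mapsto d_1(g_t w, g_t w')$ is non-increasing for $w,w'$ on the same weak stable leaf, so if the box is chosen small enough that each transverse pair starts within the modulus of uniform continuity of $\tilde f$ for a fixed tolerance $\varepsilon$, then $|\tilde f(g_t w)-\tilde f(g_t w')|<\varepsilon$ for \emph{all} $t\ge 0$; one then lets $t\to+\infty$ and only at the end $\varepsilon\to 0$. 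This is precisely estimate (ii) in the paper's proof. It also corrects your diagnosis of why the statement is local: the rank $1$ hypothesis is used to obtain, via Ballmann's Lemma III.3.1, neighborhoods $A_1\times A_2$ of $(v_-,v_+)$ such that every pair in $A_1\times A_2$ is joined by a unique rank $1$ geodesic, which makes $P$ an honest product chart whose fibers are single points (no flat strips, so the volume factor in the definition of $\mu$ is trivial) and yields clean product formulas for $\mu$ and $\mu_{H^u(v)}$; it has nothing to do with stable contraction, and non-expansion holds everywhere in nonpositive curvature.

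Two smaller omissions. First, you never verify that $\mu_{H^u(v)}(V)>0$ and $\mu(V_\rho)>0$; both normalizations, and the application of mixing to the box, are vacuous otherwise. This is exactly where $v\in\Omega$ enters: $v_-$ and $v_+$ lie in $\Lambda(\Gamma)$, so the boundary projection $V_+$ and the transverse neighborhood of $v_-$ have positive $\mu_{x_0}$-measure. Second, the existence of a product neighborhood $B_-\times B_+\times I$ inside the image of $P$ does not follow merely from $P$ being a homeomorphism onto its image on rank $1$ vectors; you need every pair $(\xi,\eta)\in B_-\times B_+$ to actually be joined by a geodesic, and by one of rank $1$ --- this is the content of the cited lemma of Ballmann, and it should be invoked explicitly.
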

	
\begin{proof}
	We follow the same strategy as M. Babillot in \cite{Babillot1}, which consists in approximating the integral on a piece of horosphere by the integral of the same function on a box around that piece, and then use the mixing property of the geodesic flow with respect to $\mu$. The added difficult is to find a box with a good system of coordinates, which is done by avoiding the vectors of higher rank.
	
	Let $v$ be a rank $1$ vector in $\Omega$ and denote its horosphere by $H$. 
	From \cite[Lemma III.3.1]{ballmannlectures}
	 we know that there exist disjoint connected neighborhoods $A_1$ and $A_2$ of $v_-$ and $v_+$, respectively, in $\partial \tilde{M}$ such that: for every $(\xi,\eta)\in A_1\times A_2$ there exists a unique geodesic from $\xi$ to $\eta$, and it has rank $1$. This allows us to consider a coordinate neighborhood of $v$ via the map $P$ of the form $A_1\times A_2 \times \RR$.
	
	We claim that the proposition is true with $U=P_H^{-1}( A_2)$. Consider any neighborhood $V\subset U$ of $v$ and write $V_+:=\{w_+\,|\,w\in V\}$ for its projection to the boundary at infinity $\partial \tilde{M}$. Since $v$ is nonwandering, its endpoints are in the limit set, and this guarantees that $V_+$ and $V$ have positive measure. We notice that the integral on $V$ of a function $h$ of $T^1 \tilde{M}$ can be written in coordinates as
\begin{equation*}
	\int_V h\,\dd\mu_{H^u(v)}=\int_{V_+}h(v_-,\eta , t_0)e^{\delta \beta_\eta (x_0,\pi (v_-,\eta, t_0))} \dd\mu_{x_0}(\eta),
\end{equation*}
where $t_0$ has the value $\beta_{v_-}(x_0,\pi(v))$, because the volume $\vol$ is always $1$ on rank $1$ vectors. This is because $P^{-1}   (\{(v_-,\eta,t_0)\}$ consists of just one vector when $v$ is of rank $1$. Otherwise, the flat strip theorem \cite[Corollary I.5.8(ii)]{ballmannlectures} asserts that $v$ bounds a flat totally geodesic surface, which is not possible for a rank $1$ vector.

Given $\varepsilon>0$, we can find a small connected neighborhood $B\subset A_1$ of $v_-$ and a number $r>0$ such that:
\begin{enumerate}[label={(\roman*)}]
	\item \label{expapprox}$\forall \xi \in B,\, \forall \eta \in V_+, \quad 1-\varepsilon\le 
	e^{\delta \beta_\eta (\pi (v_-,\eta, t_0),\,\pi (\xi,\eta, t_0))}
	\le 1+\varepsilon$,
	\item \label{fapprox}$\forall (\xi,\eta) \in B\times V_+, \forall s\in [-r,r], \forall t\ge 0,
	\quad
	| \tilde{f}(v_-,\eta , t_0+t)-\tilde{f}(\xi,\eta , t_0+t+s)|<\varepsilon$.	
\end{enumerate}
The first property follows from the continuity of the map $P$ on the coordinate neighborhood, and the continuity of the projection $\pi$ and of the Busemann function. We use that $V_+$ is relatively compact to assert that the inequality holds uniformly in $\eta\in V_+$. For property \ref{fapprox}, we apply the uniform continuity of $\tilde{f}$, and then we choose $B$ and $r$ so that the points $(v_-,\eta , t_0)$ and $(\xi,\eta , t_0+s)$ are close enough for $\xi \in B$ and $s\in [-r,r]$, uniformly in $\eta \in V_+$. Since these points are in the same weak stable leaf, the distance between them does not increase when they are pushed by the geodesic flow, which allows us to deduce the above property for all $t\ge 0$. Again the condition $v\in \Omega $ implies that $v_-\in\Lambda(\Gamma)$, which ensures that $B$ has positive measure.

These estimates allow one to compare the average of $\tilde{f}\circ g_t$ on the set $V$ with respect to $\mu_{H^u(v)}$ and the average of the same function on the box of the form $P^{-1}(B\times V_+\times [t_0,t_0+r])$ with respect to the measure $\mu$ by means of the product structure of $\mu$ (Equation \ref{definition_measure}). More precisely, for all nonnegative $t$,

\begin{equation*}
\left[\frac{ \int_V \tilde{f}\circ g_t \, \dd \mu_{H^u(v)} }{ \mu_{H^u(v)}(V) }-\varepsilon 
\right]
\frac{1-\varepsilon}{1+\varepsilon}
\le
\frac{ \int_{P ^{-1}(B\times V_+\times [t_0,t_0+r])} \tilde{f}\circ g_t \, \dd \mu }{\mu(P^{-1}(B\times V_+\times [t_0,t_0+r]))} 
\le
\end{equation*}
\begin{equation*}
	\left[\frac{ \int_V \tilde{f}\circ g_t \, \dd \mu_{H^u(v)} }{ \mu_{H^u(v)}(V) }+\varepsilon 
	\right]
	\frac{1+\varepsilon}{1-\varepsilon}.
\end{equation*}

Moreover we may assume that the neighborhood $P^{-1}(B\times A_2\times [t_0,t_0+r])\subset T^1\tilde{M}$ is homeomorphic to its projection on the unit tangent bundle of the manifold $M$. Then, since the geodesic flow is mixing with respect to $\mu$, the average of $\tilde{f}\circ g_t$ in $P^{-1}(B\times V_+\times [t_0,t_0+r])$ converges to $\frac{1}{\mu{(T^1 M)}}\int f \dd \mu $ when $t$ goes to infinity. We have thus shown the equidistribution of $U$.

\end{proof}	

To deduce a global result, we need to understand what happens on vectors of rank different from $1$, and the next two lemmas will be crucial. The \emph{unstable manifold} of $v$ in $T^1\tilde{M}$ is the set
\begin{equation*}
	W^u(v)=\{w\in T^1\tilde{M}\,|\, d_1(g_t(v),g_t(w))\rightarrow 0,\,t\to -\infty\}.
\end{equation*}
$W^u(v)$ is a subset of the unstable horosphere $H^u (v)$, but they are not necessarily equal in nonpositive curvature.

\begin{lemma}\label{lemma1}
	Let $M$ be a rank $1$ nonpositively curved non-elementary complete connected Riemannian manifold. If $v$ is a rank $1$ recurrent vector in $T^1\tilde{M}$, then its unstable horosphere coincides with its unstable manifold, $H^u(v)=W^u(v)$, and it consists of rank $1$ vectors exclusively.
\end{lemma}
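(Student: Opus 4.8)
The plan is to exploit the rigidity that comes from the presence of a parallel Jacobi field: if some vector $w \in H^u(v)$ had rank $\geq 2$, the flat strip theorem would force a flat piece adjacent to the geodesic of $w$, and I would propagate this flatness back to $v$ using the fact that $v$ and $w$ are positively asymptotic (they share the center $v_-$ of the horosphere) together with the recurrence of $v$. Concretely, I would first recall the standard facts I am allowed to use: the set of rank $1$ vectors is open; on rank $1$ vectors the map $P$ is a homeomorphism onto its image; and the flat strip theorem says that a vector of rank $\geq 2$ bounds a flat totally geodesic immersed strip (indeed a flat half-plane on the side where the parallel Jacobi field persists).

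The first main step is to show $H^u(v)$ consists only of rank $1$ vectors. Suppose $w \in H^u(v)$ has rank $\geq 2$. Then there is a nontrivial parallel Jacobi field along the geodesic of $w$, and along the positive ray this produces (via the flat strip / half-plane theorem) a flat half-plane $F$ with boundary geodesic $\sigma_w$ whose forward endpoint is $w_+$ and whose backward endpoint is $w_- = v_-$. Now I would use recurrence of $v$: there is a sequence of isometries $\gamma_n \in \Gamma$ and times $t_n \to +\infty$ with $d\gamma_n g_{t_n} v \to v$ in $T^1\tilde M$ (this is the lift of recurrence of the projection). Because $w$ is on the unstable horosphere of $v$, after flowing far in positive time and applying the $\gamma_n$, the vector $w$ also has images accumulating — the key point is that $\gamma_n g_{t_n} w$ stays within bounded $d_1$-distance of $\gamma_n g_{t_n} v$ along the positive ray (they are positively asymptotic and on the same horosphere, so the distance between their basepoints along the flow is non-increasing forward in time; more carefully, $d(\pi g_{t} v, \pi g_{t} w)$ is bounded as $t \to +\infty$). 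Hence $\gamma_n g_{t_n} w$ subconverges to some vector $w_\infty$ in the unstable horosphere of $v$, and since flatness is preserved by isometries and passes to limits (flat strips of uniform width converge to a flat strip), $w_\infty$ also bounds a flat half-plane, i.e. has rank $\geq 2$. But $w_\infty$ lies arbitrarily close to $v$, contradicting that $v$ is rank $1$ and the rank $1$ locus is open. Therefore every $w \in H^u(v)$ has rank $1$.

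The second step, $H^u(v) = W^u(v)$, now follows more easily. The inclusion $W^u(v) \subset H^u(v)$ is general. For the reverse, take $w \in H^u(v)$; by the first step $w$ has rank $1$, and $w_- = v_-$. I would argue that two rank $1$ vectors on the same horosphere are forward-asymptotic with exponentially... — no, that is false in nonpositive curvature; instead I would argue that since both $v$ and $w$ are rank $1$ with $v_- = w_-$ and they lie on the same horosphere $\{\beta_{v_-}(\pi v, \pi w) = 0\}$, the negatively-tangent rays of $v$ and $w$ are asymptotic, so $d(\pi g_t v, \pi g_t w)$ is bounded for $t \leq 0$; combined with the fact that $v$ (hence, by the flat strip theorem applied in the rank $1$ case, $w$) bounds no flat strip, the divergence of nearby geodesics backward is genuine and one deduces $d_1(g_t v, g_t w) \to 0$ as $t \to -\infty$ by the standard argument that on a horosphere around a rank $1$ (hence "expansive") direction, asymptotic geodesics converge. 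Equivalently: on the set of rank $1$ vectors $P$ is a homeomorphism, $g_{-t} v$ and $g_{-t} w$ have the same $-\infty$ endpoint and their $\beta$-coordinates agree, and convergence of the remaining coordinate as $t \to -\infty$ is precisely the statement that asymptotic rank $1$ geodesics converge, which holds because a rank $1$ geodesic cannot be bi-asymptotic to a distinct geodesic (flat strip theorem again). Hence $w \in W^u(v)$.

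The main obstacle I expect is making the limiting argument in step one fully rigorous: one must check that the flat half-planes bounded by $\gamma_n g_{t_n} w$ do not escape to infinity or degenerate, which requires a uniform lower bound on how much flatness survives — this is where one uses that the basepoints $\pi(\gamma_n g_{t_n} w)$ stay in a bounded region (because they track the convergent sequence $\gamma_n g_{t_n} v$) so the flat strips live in a fixed compact part of $\tilde M$ and a standard Arzelà–Ascoli / limit-of-totally-geodesic-submanifolds argument applies. A secondary subtlety is that recurrence is a priori a statement about the non-wandering dynamics downstairs, so one should state carefully the lift to $T^1 \tilde M$ (existence of $\gamma_n$, $t_n$), but this is routine.
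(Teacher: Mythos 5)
Your first step rests on two claims that do not hold. First, a vector of rank $\ge 2$ does not in general bound a flat strip or half-plane: higher rank only gives a parallel (infinitesimal) Jacobi field, and the flat strip theorem runs in the other direction (distinct bi-asymptotic geodesics span a flat strip). A surface whose curvature vanishes exactly along one geodesic and is negative elsewhere already gives a rank $2$ geodesic with no flat piece adjacent to it, so the ``propagate the flatness'' mechanism cannot even start. Second, and more seriously, you have the asymptotics reversed: $w\in H^u(v)$ means $w_-=v_-$, so the geodesics of $v$ and $w$ are \emph{negatively} asymptotic; $d(\pi g_t v,\pi g_t w)$ is controlled as $t\to-\infty$ and typically grows without bound as $t\to+\infty$ (these are the expanding horospheres). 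With your choice $t_n\to+\infty$, the vectors $\gamma_n g_{t_n}w$ have no reason to stay near $\gamma_n g_{t_n}v$, hence no reason to accumulate near $v$, and the limiting argument collapses. Even after flipping to $t_n\to-\infty$, membership in $H^u(v)$ alone only yields a \emph{bounded} distance $d_1(g_{t_n}v,g_{t_n}w)$, not one tending to $0$, so the conclusion that the limit lies ``arbitrarily close to $v$'' still does not follow; a higher rank vector at bounded distance from $v$ contradicts nothing.

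These are exactly the gaps that the identity $H^u(v)=W^u(v)$ repairs, which is why the paper establishes it \emph{first} (by quoting Knieper, Proposition 4.1) rather than last: for $w\in W^u(v)$ one has $d_1(g_t v,g_t w)\to 0$ as $t\to-\infty$ by definition, so negative recurrence $\gamma_n g_{t_n}v\to v$ forces $\gamma_n g_{t_n}w\to v$ as well, and then upper semicontinuity of the flow- and isometry-invariant rank gives $\rank w\le\rank v=1$, with no flat strips involved. Your second step, conversely, tries to deduce $H^u(v)=W^u(v)$ from the rank $1$ property by asserting that negatively asymptotic rank $1$ geodesics on the same horosphere converge ``by a standard argument''; that assertion is precisely the nontrivial content of Knieper's proposition, and its proof uses the recurrence of $v$ (to bring a putative positive limiting distance back to a compact region and produce a genuine flat strip), not merely the absence of a bi-asymptotic companion geodesic. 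As written, the proposal inverts the logical order of the paper's proof and leaves both halves unsupported.
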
 
\begin{proof}
	The fact that the unstable manifold and the horosphere coincide is already proved in \cite[Proposition 4.1]{Knieper98}. Let $w$ in $W^u(v)$ and $r$ its rank, we will see that $r$ is $1$. Since $v$ is negatively recurrent there exist a sequence $t_n\rightarrow -\infty$ and isometries $\gamma_n\in \Gamma$ such that $\gamma_n(g_{t_n}(v))\rightarrow v$ when $n\to\infty$. Now we have
	\begin{equation*}
		d_1(v,\gamma_ng_{t_n}(w))\le  d_1(v,\gamma_ng_{t_n}(v))+d_1(g_{t_n}(v),g_{t_n}(w))\longrightarrow 0
	\end{equation*}
	and the rank of $\gamma_n g_{t_n}(w)$ is the same as the rank of $w$, $r$. Since $v$ is a limit of vectors of rank $r$ and the rank function is upper semi-continuous, we deduce $r\le \rank v = 1$. 
\end{proof}

\begin{lemma}\label{lemma2}
	Let $M$ be a nonpositively curved non-elementary complete connected Riemannian manifold with a closed rank $1$ geodesic. Assume that the Bowen-Margulis measure $\mu$ is finite and that the geodesic flow $g_t$ on $T^1M$ is ergodic with respect to the $\mu$. Then, for every horocycle $H$, the set of vectors in $H$ of rank equal or bigger than $2$ is $\mu_H$-negligible.
\end{lemma}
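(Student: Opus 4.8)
The plan is to use ergodicity of the geodesic flow together with the product structure of $\mu$ (Equation \eqref{product_structure}) to transfer the conclusion from a single horocycle to the whole space. First I would introduce the set $R\subset T^1\tilde M$ of vectors of rank $\ge 2$. This set is $g_t$-invariant (the rank is constant along a geodesic), $\Gamma$-invariant, and Borel (the rank function is upper semi-continuous, so $\{v:\rank v\ge 2\}$ is closed; in any case measurable). Passing to the quotient, $R$ projects to a $g_t$-invariant Borel subset of $T^1M$, so by ergodicity of $(T^1M,g_t,\mu)$ either $\mu(R)=0$ or $\mu(T^1M\setminus R)=0$. The second alternative is impossible: by hypothesis $M$ has a closed rank $1$ geodesic, the set of rank $1$ vectors is open \cite{ballmannlectures}, and a rank $1$ periodic vector lies in $\Omega=\operatorname{supp}\mu$, so a neighborhood of it consisting of rank $1$ vectors has positive $\mu$-measure. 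Hence $\mu(R)=0$.

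Now I would feed this into the product formula. Writing $R$ for the (lifted) set of vectors of rank $\ge 2$, property (iv) gives
\begin{equation*}
0=\mu(R)=\int_{\mathcal H}\mu_H(R\cap H)\,\dd\hat\mu(H).
\end{equation*}
Since the integrand is nonnegative, $\mu_H(R\cap H)=0$ for $\hat\mu$-almost every horosphere $H\in\mathcal H$. But $R\cap H$ is exactly the set of vectors of rank $\ge 2$ on $H$, so the conclusion holds for $\hat\mu$-a.e. $H$. To upgrade "almost every $H$" to "every $H$", I would invoke the $\Gamma$-equivariance: $\gamma_*\mu_H=\mu_{\gamma H}$ and $\gamma$ preserves rank, so the property "$\mu_H$ gives zero mass to rank $\ge 2$ vectors" is $\Gamma$-invariant on $\mathcal H$. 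Because the measure class of $\hat\mu$ on $\mathcal H$ (equivalently, of $\mu_{x_0}$ on $\partial\tilde M$ together with Lebesgue in the $\RR$-coordinate) is ergodic under $\Gamma$ in the conservative case described in Section~\ref{Sec:notation}, a $\Gamma$-invariant set of full $\hat\mu$-measure must be all of $\mathcal H$ up to a $\hat\mu$-null set — but I actually want every $H$, including those outside $\Omega$.

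For horospheres $H$ not meeting $\Omega$, or not containing a nonwandering vector, the statement is still asserted; here I would note that if $H$ contains no vector with $v_-\in\Lambda(\Gamma)$ then, by the definition of $\mu_H$ via $\dd\mu_{x_0}(\eta)$ and the fact that $\mu_{x_0}$ is supported on $\Lambda(\Gamma)$, one checks directly that $\mu_H$ could be nonzero only through the $\eta$-coordinate ranging in $\Lambda(\Gamma)$; a cleaner route is to observe that any $H$ meeting $\Omega$ is $\Gamma$-equivalent, up to the flow and to approximation, to one in the full-measure set, and to reduce to rank $1$ recurrent vectors via Lemma~\ref{lemma1}, which already tells us $H^u(v)=W^u(v)$ consists entirely of rank $1$ vectors when $v$ is rank $1$ and recurrent. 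Since $\mu$-almost every vector is rank $1$ and (by Poincaré recurrence, $\mu$ being finite) recurrent, $\hat\mu$-a.e. horosphere contains such a $v$ and is then pointwise rank $1$; combined with the ergodicity argument this pins down all the relevant $H$.

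The step I expect to be the main obstacle is the last one: promoting the almost-everywhere statement to the claimed "for every horocycle $H$". The honest content of the lemma is the measure-theoretic statement for $\hat\mu$-a.e.\ $H$, and whether it literally holds for every single horocycle depends on handling horocycles centered outside the limit set, where $\mu_H$ may behave degenerately; I would either restrict attention (as the later results do) to horospheres containing a nonwandering vector, where Lemma~\ref{lemma1} applies after noting recurrence is generic, or simply prove the a.e.\ version, which is all that is used in the product-structure computations downstream.
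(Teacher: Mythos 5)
There is a genuine gap, and you have identified it yourself: your argument only yields the conclusion for $\hat\mu$-almost every horosphere, while the lemma asserts it for \emph{every} horocycle $H$, and this stronger form is really needed downstream — the global equidistribution theorem is stated for an arbitrary horosphere centered at $\Lambda(\Gamma)$ and its proof invokes Lemma~\ref{lemma2} for that specific $H$, so the a.e.\ version would not suffice. The product-structure step cannot be upgraded by $\Gamma$-equivariance or ergodicity of $\hat\mu$, exactly as you suspect, and the suggestion that an arbitrary $H$ is ``$\Gamma$-equivalent, up to the flow and to approximation'' to a good one does not work: null-set statements do not pass through approximation of horospheres.

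The missing idea in the paper's proof is to work on the boundary at infinity rather than on the space of horospheres. Let $S$ be the set of vectors of rank $\ge 2$ and $Rec^1$ the set of rank $1$ recurrent vectors. One first shows $Rec^1_+\cap S_+=\emptyset$: if $v\in Rec^1$ and $w\in S$ had $v_+=w_+$, then by Lemma~\ref{lemma1} the horosphere $H^u(-v)$ consists entirely of rank $1$ vectors, and the geodesic of $-w$ (whose negative endpoint is $v_+$) must cross $H^u(-v)$, forcing $\rank w=1$, a contradiction. Then, from $\mu(S)=0$ (your ergodicity argument, which matches the paper) and Poincar\'e recurrence, $Rec^1$ has full $\mu$-measure; the product structure gives $\mu_{x_0}(Rec^1_+)>0$, and since $Rec^1_+$ is $\Gamma$-invariant and $\Gamma$ acts ergodically on the measure class of $\mu_{x_0}$, the set $Rec^1_+$ has full $\mu_{x_0}$-measure, so $S_+$ is a single fixed $\mu_{x_0}$-null subset of $\partial\tilde M$. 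Because $\mu_H$ is defined for \emph{any} horocycle $H$ by integrating over $\partial\tilde M$ against $\mu_{x_0}$, and the rank $\ge 2$ vectors of $H$ project into $S_+$ under $P_H$, one gets $\mu_H(S\cap H)=0$ for every $H$ simultaneously. This boundary-projection step is the key point absent from your proposal; without it the quantifier ``for every horocycle'' is out of reach.
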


\begin{proof}
	Let $Rec^1\subset T^1\tilde{M}$ be the set of rank $1$ vectors which are recurrent under $g_t$ on the quotient $T^1 M$ and $S\subset T^1\tilde{M}$ be the set of vectors of rank $2$ or higher. We claim that the projections to the boundary of these two sets are disjoint, $Rec^1_+\cap S_+=\emptyset$. Otherwise, there are vectors $v\in Rec^1$ and $w\in S$ such that $v_+=w_+$. By Lemma $\ref{lemma1}$, the unstable horosphere of $-v$ only contains vectors of rank $1$. The geodesic associated to $-w$ intersects this horosphere $H^u(-v)$ (Figure \ref{fig:pic2}), so $w$ should have rank $1$, which is a contradiction. 
	
	\begin{figure}[h]
		\centering
		\includegraphics{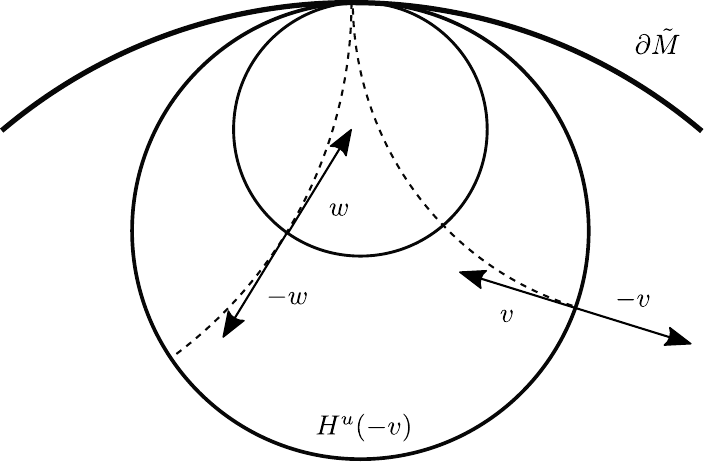}
		\caption{ \label{fig:pic2}
			Vectors $v$ and $w$ of the proof.}
	\end{figure}
	
	Around a nonwandering rank $1$ vector there is a neighborhood only consisting of rank $1$ vectors, and this neighborhood has positive measure because it intersects the support of $\mu$. By hypothesis, the manifold $M$ contains a closed rank $1$ geodesic, which is an example of nonwandering rank $1$ geodesic. The set of rank $1$ vectors  has positive measure, and it is invariant under the geodesic flow. So the set of rank $1$ vectors has full measure because of the ergodicity of $\mu$. In consequence, the set of rank $1$ recurrent vectors $Rec^1$ has also full $\mu$-measure in view of Poincaré recurrence theorem. By the product structure of $\mu$, we see that $Rec^1_+$ has positive $\mu_{x_0}$-measure. Finally, $Rec^1_+$ is a $\Gamma$-invariant set, so we deduce that $Rec^1_+$ has full $\mu_{x_0}$-measure because $\Gamma$ acts ergodically.
	
	Therefore, $S_+$ is negligible. The endpoints of higher rank vectors in $H^u(v)$ are clearly in $S_+$ and, using the definition of the measure on the horosphere, we obtain $\mu_{H^u(v)}(S\cap H^u(v))=0$.
\end{proof}

We can finally prove Theorem \ref{theorem_timeed}, which we have reformulated in terms of horospheres on the universal cover $\tilde{M}$. On the horospheres centered at the limit set, every open set with positive and finite measure is equidistributed (Figure \ref{fig:pic3}). Being positive is equivalent to having a nonwandering rank $1$ vector. In particular, all relatively compact neighborhoods of nonwandering rank $1$ vectors are equidistributed. 

\begin{figure}[h]
	\centering
	\includegraphics{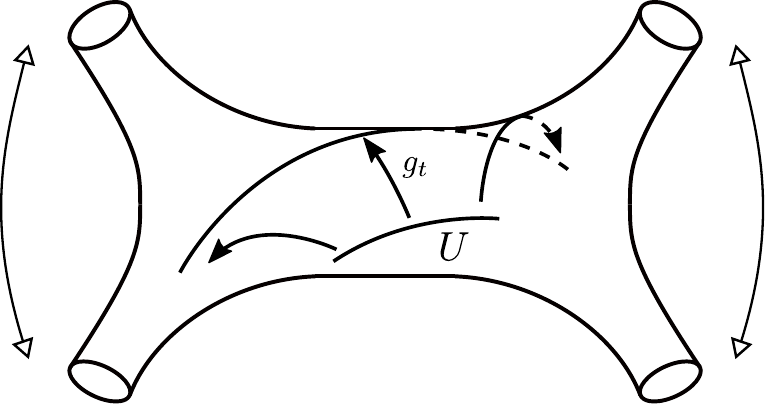}
	\caption{ \label{fig:pic3}
		The average of $f$ on the image of an open subset $U$ of a horosphere $H$ by the geodesic flow $g_t$ with respect to $\mu_H$ tends to the average of $f$ with respect to $\mu$.}
\end{figure}

\begin{theorem}
	Let $M$ be a nonpositively curved non-elementary complete connected Riemannian manifold  with a closed rank $1$ geodesic. Assume that the geodesic flow $g_t$ on $T^1M$ is topologically mixing on $ \Omega$ and that the Bowen-Margulis measure $\mu$ is finite. Then, for every horosphere $H\subset T^1 \tilde{M}$ centered at $\Lambda(\Gamma)$, every open subset $U$ of $H$ of finite but positive $\mu_H$-measure is equidistributed under the action of the geodesic flow; i.e. for every bounded and uniformly continuous function $f$ on $T^1M$, we have
	\begin{equation*}
	\frac{1}{\mu_{H}(U)}\int_U \tilde{f}\circ g_t \, \dd \mu_{H} \xrightarrow[t \to +\infty]{} \frac{1}{\mu(T^1 M)}\int_{T^1 M}f \,\dd \mu .
	\end{equation*}
\end{theorem}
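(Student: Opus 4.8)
The plan is to reduce this global statement to the local equidistribution Proposition~\ref{prop_local_ed} by a covering argument, after throwing away a $\mu_H$-null part of $U$. The point is that, although $U$ may well contain vectors of higher rank or vectors escaping to infinity, these are $\mu_H$-negligible, so up to measure zero $U$ is a countable disjoint union of pieces to which Proposition~\ref{prop_local_ed} applies.

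\emph{Step 1: $\mu_H$-almost every vector of $U$ is a rank $1$ vector in $\Omega$.} Since $g_t$ is topologically mixing on $\Omega$, it is mixing — hence ergodic — with respect to $\mu$; combined with the hypotheses (a closed rank $1$ geodesic, $\mu$ finite) this places us in the setting of Lemma~\ref{lemma2}, so the set of vectors of $H$ of rank $\ge 2$ is $\mu_H$-negligible. Moreover, directly from the definition of $\mu_H$ the set $P_H^{-1}(\partial\tilde M\setminus\Lambda(\Gamma))$ is $\mu_H$-null, because the conformal density $\mu_{x_0}$ is supported on $\Lambda(\Gamma)$; hence $\mu_H$-a.e. $v\in H$ satisfies $v_+\in\Lambda(\Gamma)$, while $v_-\in\Lambda(\Gamma)$ for every $v\in H$ as $H$ is centered at $\Lambda(\Gamma)$. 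Therefore the set $U':=\{v\in U:\rank v=1,\ v\in\Omega\}$ satisfies $\mu_H(U')=\mu_H(U)$.

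\emph{Step 2: a countable good cover of $U'$ and disjointification.} Inspecting the proof of Proposition~\ref{prop_local_ed}, the assumption that $V$ be a \emph{neighborhood} of $v$ is used only to guarantee $\mu_H(V)>0$; the same argument therefore shows that, for each rank $1$ vector $v\in\Omega$, the open set $U_v\subseteq H^u(v)$ produced there has the property that \emph{every} relatively compact Borel subset $V\subseteq U_v$ with $0<\mu_H(V)<\infty$ is equidistributed under $g_t$. For each $v\in U'$, using that $H$ is a locally compact manifold, choose an open relatively compact neighborhood $N_v$ of $v$ in $H$ with $\overline{N_v}\subseteq U\cap U_v$. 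The family $\{N_v\}_{v\in U'}$ covers $U'$, and since $H$ is second countable, hence Lindelöf, it admits a countable subcover $U'\subseteq\bigcup_{n\ge1}N_{v_n}$. Put $V_1:=N_{v_1}$ and $V_n:=N_{v_n}\setminus(N_{v_1}\cup\dots\cup N_{v_{n-1}})$ for $n\ge2$. Each $V_n$ is a relatively compact Borel subset of $U_{v_n}$, the $V_n$ are pairwise disjoint, $U'\subseteq\bigsqcup_n V_n\subseteq U$, and consequently $\mu_H\big(\bigsqcup_n V_n\big)=\mu_H(U)$. Discarding the indices with $\mu_H(V_n)=0$, each remaining $V_n$ is equidistributed by the strengthened form of Proposition~\ref{prop_local_ed}.

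\emph{Step 3: summing the pieces, and the main obstacle.} Write $c:=\frac{1}{\mu(T^1M)}\int_{T^1M}f\,\dd\mu$. Since $\tilde f$ is bounded by $\norm{f}_\infty$ and $\sum_n\mu_H(V_n)=\mu_H(U)<\infty$, for every $t\ge0$ one has $\int_U\tilde f\circ g_t\,\dd\mu_H=\sum_n\int_{V_n}\tilde f\circ g_t\,\dd\mu_H$ with $\bigl|\int_{V_n}\tilde f\circ g_t\,\dd\mu_H\bigr|\le\norm{f}_\infty\,\mu_H(V_n)$; each summand tends to $c\,\mu_H(V_n)$ as $t\to+\infty$ by equidistribution of $V_n$, and the uniform summable bound allows exchanging the limit with the sum, whence $\int_U\tilde f\circ g_t\,\dd\mu_H\to c\,\mu_H(U)$, i.e. the claim after dividing by $\mu_H(U)$. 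I expect the only delicate point to be Step 1 — establishing that $\mu_H$-almost every vector of $U$ is a rank $1$ nonwandering vector, which is exactly where Lemma~\ref{lemma2} and the localisation of $\mu_{x_0}$ on $\Lambda(\Gamma)$ are needed — together with the routine but necessary observation that the proof of Proposition~\ref{prop_local_ed} goes through for arbitrary relatively compact Borel subsets of positive measure. Everything else is soft measure theory.
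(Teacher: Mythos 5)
Your proof is correct and follows essentially the same route as the paper: discard the higher-rank and non-nonwandering vectors using Lemma~\ref{lemma2} and the fact that $\mu_{x_0}$ charges only $\Lambda(\Gamma)$, cover what remains by the local neighborhoods of Proposition~\ref{prop_local_ed}, disjointify, and sum --- including the same strengthening of Proposition~\ref{prop_local_ed} to Borel subsets of $U_v$ of positive measure, which the paper also invokes (``the subsets of $U_v$ are equidistributed too''). The only difference is bookkeeping: where you take a countable Lindel\"of cover of the full-measure good set and interchange limit and sum by domination, the paper uses that $\mu_H$ is Radon to pass to a compact core $K\cap\Omega$ covered by finitely many $U_v$ and absorbs the leftover mass into an error of size $2\varepsilon\norm{f}_\infty/\mu_H(U)$; both finishes are equally valid.
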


\begin{proof}
	We first observe that the set $U^1$ of rank $1$ vectors in $U$ is open in $H$, because the set of rank $1$ vectors is open in $T^1 \tilde{M}$ \cite{ballmannlectures}. By Lemma \ref{lemma2}, the set $U^1$ has full measure in $U$, so the averages on the two sets are the same. Next, we use the fact that $\mu_H$ is a Radon measure: given a number $\varepsilon>0$, there exists a compact subset $K\subset U^1$ such that $\mu_H(U^1\setminus K)<\varepsilon $. 
	
	Since $\Omega $ is closed, $L=K\cap \Omega $ is again compact, and $L$ has full measure in $K$, because vectors outside of $\Omega$ are not in the support. We want to show that $L$ is equidistributed. Proposition \ref{prop_local_ed} gives an equidistributed open neighborhood $U_v$ of each vector $v$ in $L$. The set $L$ can be covered by finitely many $U_v$ because it is compact. We can cut these sets where they intersect to obtain  a family $\{V_i\}_{1\le i\le n}$ of equidistributed pairwise disjoint Borel sets whose union contains $L$, thanks to the fact that the subsets of $U_v$ are equidistributed too.
	
	If we let $\lambda :=\int f \dd \mu /\mu(T^1 M)$, the set $V:=V_1\cup\dots \cup V_n$ is equidistributed because
	\begin{equation*}
	\frac{\int_V \tilde{f}\circ g_t \, \dd \mu_{H}}{\mu_{H}(V)} 
	=\frac{\sum_{i=1}^n \int_{V_i} \tilde{f}\circ g_t \, \dd \mu_{H}}{\mu_{H}(V)} 
	\xrightarrow[t \to +\infty]{} 
	\frac{\sum_{i=1}^n \mu_H(V_i) \lambda}{\mu_H(V)}
	=\lambda.
	\end{equation*}
	On the other hand, we have $\mu_{H}(U\setminus V)<\varepsilon$, so
	\begin{equation*} 
	\left| \frac{1}{\mu_{H}(U)}\int_{U} \tilde{f}\circ g_t \, \dd \mu_{H} -
	\frac{1}{\mu_{H}(V)}\int_V \tilde{f}\circ g_t \, \dd \mu_{H} \right|
	\le \frac{2\varepsilon \norm{f}_\infty}{\mu_H(U)}
	\end{equation*}
	for all $t\ge0$. This proves that $U$ is equidistributed as well.
\end{proof}

\section{Unique ergodicity of the horocyclic flow}

\subsection{Surfaces with nonpositive curvature}

In the second part of the article we restrict our attention to surfaces. Our goal is to define a flow that preserves the Bowen-Margulis measure and whose orbits are horocycles. Then we would like to interpret the equidistribution of horocycles in terms of the ergodic properties of this flow. The idea is to define the parametrization of the flow by the measures on the horocycles as in the negative curvature case \cite{Marcus1}. However, the presence of flat pieces of horocycle makes impossible to define globally a continuous flow with this method. We have found a subset ${\Sigma}$ of the unitary tangent bundle which excludes the horocycles causing trouble, like that of Figure \ref{fig:pic1}, and which is topologically and metrically large. We will define a parametrization of the horocyclic flow on ${\Sigma}$ and prove that it is uniquely ergodic.

In this section, $M$ is a nonpositively curved non-elementary orientable surface with a closed rank $1$ geodesic and the Bowen-Margulis measure $\mu$, constructed as before, is assumed to be finite. We will further assume that $M$ satisfies the duality condition, which means that every vector of $T^1M$ is nonwandering, or equivalently we assume that $\Lambda(\Gamma)=\partial\tilde{M}$. Under these hypothesis, the geodesic flow is topologically mixing \cite[Theorem 6.3]{Eberlein73}, so it is also mixing with respect to the Bowen-Margulis measure. The duality condition is satisfied if $M$ has finite Riemannian volume, as an application of the Poincaré recurrence theorem. Just to mention, a nonpositively curved non-elementary rank $1$ manifold satisfying the duality condition contains automatically a closed rank $1$ geodesic.

Moreover, we know that any two distinct points in the boundary at infinity can be connected by a geodesic. It follows from the fact that, for a nonflat surface $M$ with the duality condition, the universal cover $\tilde{M}$ satisfies the visibility axiom \cite[Proposition 2.5]{Eberlein79}. Therefore, the map $P$ is surjective.

We notice that an orientation of the boundary at infinity $\partial \tilde{M}$ induces an orientation to each horocycle in $\mathcal{H}$. One vector $v\in T^1 \tilde{M}$ divides its horocycle $H^u(v)$ in two connected sets, one in the positively oriented direction, $H^u_+(v)$, and the other in the negatively oriented, $H^u_-(v)$. The group of isometries $\Gamma$ is orientation-preserving because $M$ is orientable. In consequence, horocycles on $T^1\tilde{M}$ descend to $T^1M$ as oriented immersed curves.

Horocycles are diffeomorphic to the real line. Let $H$ be a horocycle of $T^1 \tilde{M}$. The interval $(v,w)\subset H$ between two vectors $v,w\in H$ is the connected subset bounded by $v$ and $w$. The map $P_H:H\rightarrow \partial\tilde{M}\setminus\{\xi\}$, where $\xi$ is the center of $H$, which projects a vector to its positive endpoint, is continuous and surjective. We also observe that $P_H(v)=P_H(w)$, with $v\not =w$, implies, according to the flat strip theorem, that the curvature vanishes on the strip $\pi(\cup_{t\in\RR} g_t((v,w)))$. Such an interval $(v,w)$ will be called \emph{flat piece of horocycle} (see Figure \ref{fig:pic4}). It is clear that $H$ does not contain any flat piece if and only if $P_H$ is injective, in which case $P_H$ is also a homeomorphism.

\begin{figure}[h]
	\centering
	\includegraphics{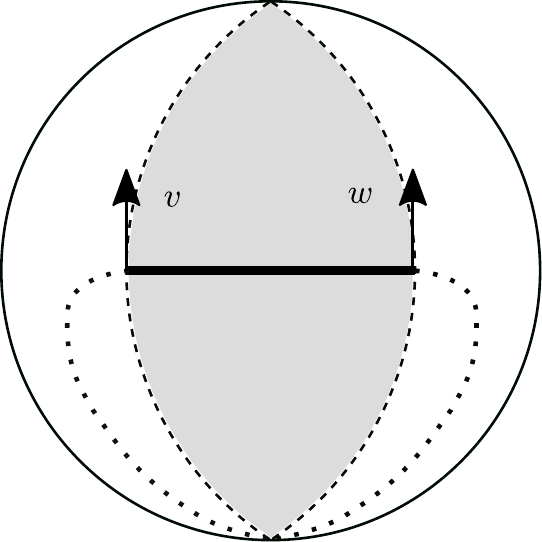}
	\caption{ \label{fig:pic4}
		Universal cover of the surface ${M}$ with a region where the curvature vanishes (shadowed region). We represent an unstable horocycle with a flat piece.}
\end{figure}

\subsection{Definition of the horocyclic flow on a certain subset of $T^1 M$}
Next, we define a subset of the unitary tangent bundle $T^1 \tilde{M}$ of $\tilde{M}$ and we study the properties of its horocycles and their associated measures. Let $\tilde{\Sigma} \subset T^1\tilde{M}$ denote the set of vectors whose horocycle contains a rank $1$ recurrent vector, that is to say,
\begin{equation*}
	\tilde{\Sigma}= \bigcup_{v\in Rec^1 }H^u(v).
\end{equation*}
This set is invariant under $\Gamma$, under the geodesic flow and under the horocyclic foliation, in the sense that $\tilde{\Sigma}$ contains a horocycle $H$ as soon as it contains one vector of $H$. Our set $\tilde{\Sigma}$ contains a $G_\delta $-dense set, namely the set of rank $1$ recurrent vectors $Rec^1$. The latter is the intersection of the set of rank $1$ vectors, which is open and dense \cite[Corollary III.3.8]{ballmannlectures}, with the set of recurrent vectors, which is $G_\delta$-dense when all the vectors of $T^1M$ are nonwandering. The set $\tilde{\Sigma}$ also has full $\mu$-measure. By Lemma \ref{lemma1}, all the vectors in $\tilde{\Sigma}$ have rank $1$ and each horocycle $H\subset\tilde{\Sigma}$ coincides with the unstable manifold. This also implies that the horocycles in $\tilde{\Sigma} $ do not contain any flat pieces of horocycle.

In the next lemma, which will be needed later, we prove a sort of continuity of the measures on the horocycles contained in $\tilde{\Sigma}$.
\begin{lemma}\label{continuity_measure}
	The map
	\begin{equation*}
	\begin{matrix}
	\{(v,w)\in \tilde{\Sigma}\times \tilde{\Sigma}\,|\,w\in H^u(v)\} & \longrightarrow & \RR \\
	(v,w) & \longmapsto & \mu_{H^u(v)}((v,w))
	\end{matrix}
	\end{equation*}
	is continuous.
\end{lemma}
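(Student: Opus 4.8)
The strategy is to write the quantity $\mu_{H^u(v)}((v,w))$ in the coordinates given by the homeomorphism $P$ restricted to rank $1$ vectors, and then check continuity of each ingredient in that expression. Fix a pair $(v_0,w_0)$ with $w_0\in H^u(v_0)$; both are rank $1$ vectors by Lemma \ref{lemma1}, and since the rank $1$ set is open \cite{ballmannlectures} and $P$ is a homeomorphism onto its image on rank $1$ vectors \cite{ballmannlectures}, we may work inside a coordinate chart $A_1\times A_2\times \RR$ around $v_0$ of the type used in the proof of Proposition \ref{prop_local_ed}, shrinking so that $w_0$ also lies in this chart (horocycles in $\tilde\Sigma$ have no flat pieces, so $P_{H}$ is a homeomorphism and the base points of $v_0$ and $w_0$ in the horocycle are separated in the boundary). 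Using the formula for $\mu_H$ and the fact that $\vol\equiv 1$ on rank $1$ vectors (the submanifold $\pi(P_H^{-1}(\eta))$ is a single point), one gets
\begin{equation*}
	\mu_{H^u(v)}((v,w))=\int_{P_H((v,w))} \phi_v(\eta)\,\dd\mu_{x_0}(\eta),
\end{equation*}
where $P_H((v,w))$ is an arc in $\partial\tilde M\setminus\{v_-\}$ with endpoints $v_+$ and $w_+$, and $\phi_v(\eta)=e^{\delta\beta_\eta(x_0,\pi(w'))}$ for $w'\in P_H^{-1}(\eta)$.

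**Key steps.** First, I would establish that the family of arcs $\eta\mapsto P_H((v,w))$ varies continuously in the sense that its indicator converges $\mu_{x_0}$-almost everywhere as $(v,w)\to(v_0,w_0)$: the endpoints $v_+,w_+$ depend continuously on $(v,w)$ through $P$, and since $\mu_{x_0}$ has no atoms (we are in the conservative case of the dichotomy), the boundary of the limiting arc is $\mu_{x_0}$-null, so dominated convergence applies. Second, I would show the integrand $\phi_v(\eta)$ is jointly continuous: writing $\phi_v(\eta)=e^{\delta\beta_\eta(x_0,p)}$ where $p$ is the base point of the vector of $H^u(v)$ asymptotic to $\eta$, this base point depends continuously on $(v,\eta)$ via $P^{-1}$ restricted to rank $1$ vectors, and the Busemann cocycle $\beta_\eta(x_0,\cdot)$ is continuous in both arguments; uniform control over the relatively compact arc gives a uniform bound, legitimizing the passage to the limit under the integral. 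Combining these, $\mu_{H^u(v)}((v,w))\to\mu_{H^u(v_0)}((v_0,w_0))$.

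**Main obstacle.** The delicate point is handling the base point of the horocycle near $v_-$, and more precisely the behaviour of $P_H^{-1}$ as $(v,w)$ moves: a priori different $v$'s have different centers $v_-$, so the maps $P_{H^u(v)}$ have different domains $\partial\tilde M\setminus\{v_-\}$, and one must verify that the coordinate description of the arc $(v,w)$ and of $\phi_v$ glues coherently as the center moves. This is exactly where one uses that the chart $A_1\times A_2\times\RR$ consists entirely of rank $1$ vectors with unique connecting geodesics \cite[Lemma III.3.1]{ballmannlectures}, so $P^{-1}$ is genuinely continuous there, and the absence of flat pieces guarantees $P_H$ is injective so the arc $P_H((v,w))$ is unambiguously the segment between $v_+$ and $w_+$ not containing $v_-$. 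Once the problem is localized to this chart, the remaining estimates are a routine application of dominated convergence as sketched, so I would spend most of the writing making the reduction to the chart precise and the rest brief.
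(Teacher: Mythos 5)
Your proposal is correct and takes essentially the same route as the paper: write $\mu_{H^u(v)}((v,w))$ as $\int_{(v_+,w_+)}e^{\delta\beta_\eta(x_0,\pi(P_{H^u(v)}^{-1}(\eta)))}\,\dd\mu_{x_0}(\eta)$, control the symmetric difference of the boundary arcs using continuity of the endpoints and non-atomicity of $\mu_{x_0}$, and control the integrand via continuity of $P^{-1}$ on rank $1$ vectors and the Busemann cocycle together with a uniform bound on a compact neighborhood of the arc (the paper packages this as an explicit $(1\pm\varepsilon)$-and-$K\varepsilon$ sandwich rather than dominated convergence, a purely cosmetic difference). One small caveat: the single chart $A_1\times A_2\times\RR$ of Lemma III.3.1 around $v_0$ cannot be made to contain $w_0$ by shrinking (and need not contain the whole arc), but this is harmless because every vector of $H^u(v')$ with $v'\in\tilde{\Sigma}$ has rank $1$ (Lemma \ref{lemma1} and saturation of $\tilde{\Sigma}$), so the global continuity of $P^{-1}$ on rank $1$ vectors plus relative compactness of the arc --- both of which you invoke anyway --- already give the required uniform estimates, exactly as in the paper's proof.
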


\begin{proof}
	Let $v$ and $w$ two points in $\tilde{\Sigma}$ sharing a horocycle. The function on a pair of points $(v',w')$ close to $(v,w)$ can be written as the integral
	\begin{equation*}
		\mu_{H^u(v')}((v',w'))=\int_{(v'_+,w'_+)}e^{\delta \beta_\eta (x_0,\pi(P^{-1}_{H^u(v')}(\eta)))} \dd \mu_{x_0}(\eta).
	\end{equation*}
	Given $\varepsilon>0$, we can suppose that $\mu_{x_0}((v'_+,w'_+)\triangle (v_+,w_+))<\varepsilon$.	We estimate the Busemann cocycles on $H^u(v)$ and $H^u(v')$ as we did in the proof of Proposition \ref{prop_local_ed}: for all $\eta \in (v_+,w_+)$ we have
	\begin{equation*}
		1-\varepsilon\le 
		e^{\delta \beta_\eta (\pi (P^{-1}_{H^u(v)}(\eta)),\,\pi (P^{-1}_{H^u(v')}(\eta)))}
		\le 1+\varepsilon,
	\end{equation*}
	provided that $v'$ is close to $v$.
	We also observe that $\exp({\delta \beta_\eta (x_0,\,\pi (P^{-1}_{H^u(v')}(\eta)))})$ is bounded by a constant $K$ for $v'$ in a neighborhood of $v$ and $\eta$ in a neighborhood of $v_+$ or $w_+$. With these approximations we get the inequalities
	\begin{equation*}
		(1-\varepsilon)\mu_{H^u(v)}((v,w)) - K  \varepsilon \le 
		\mu_{H^u(v')}((v',w'))
		\le (1+\varepsilon)\mu_{H^u(v)}((v,w)) + K \varepsilon,
	\end{equation*}
	which show the continuity at $(v,w)$.
\end{proof}

Let us remark that only the fact that all the vectors in the domain $\tilde{\Sigma}$ have rank $1$ was used in the previous proof. Next, we state and prove some properties of individual measures on horocycles that will help later to define the parametrization.

\begin{lemma}\label{properties_horocycles}
	Let $H$ be a horocycle of $T^1\tilde{M}$ and $v\in H$:
	\begin{enumerate}[label={(\roman*)}]
		\item The measure $\mu_H$ has no point masses.
		\item If $H$ does not contain any flat piece, then $\mu_H$ is of full support in $H$.
		\item The measure $\mu_H$ is finite on compact sets.
		\item If $v$ is in $\tilde{\Sigma}$, then the half horospheres $H^u_+(v)$ and $H^u_-(v)$ have infinite measure.
	\end{enumerate}
	
\end{lemma}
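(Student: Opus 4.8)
The plan is to verify each of the four properties directly from the definition of $\mu_{H^u(v)}$ as a pushforward-type integral over the boundary, exploiting what we already know about the conformal density $\mu_{x_0}$ in the conservative case (no atoms, full support on $\Lambda(\Gamma) = \partial\tilde M$ under the duality hypothesis of the section) and the structure of the fibers $\pi(P_{H}^{-1}(\eta))$.

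For (i), I would observe that a point mass of $\mu_H$ at $w \in H$ would force either $\mu_{x_0}(\{w_+\}) > 0$, contradicting the fact that $\mu_{x_0}$ has no point masses in the conservative case, or else would require the volume-measure fiber $\vol(\pi(P_H^{-1}(w_+)))$ to charge a single vector while $P_H^{-1}(w_+)$ is more than a point; but $P_H^{-1}(\eta)$ contributes via the induced volume of a totally geodesic submanifold of positive dimension whenever it is not a single point, so no atom arises. For (ii), if $H$ contains no flat piece then $P_H : H \to \partial\tilde M \setminus\{\xi\}$ is a homeomorphism; any nonempty open subinterval $(v,w) \subset H$ maps to a nonempty open arc in $\partial\tilde M\setminus\{\xi\}$, which has positive $\mu_{x_0}$-measure because in this section $\Lambda(\Gamma) = \partial\tilde M$ equals the support of $\mu_{x_0}$, and the weight $\phi_v(\eta) = e^{\delta\beta_\eta(x_0,\pi(w))}$ is strictly positive, so $\mu_H((v,w)) > 0$. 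For (iii), a compact subset of $H$ is contained in some interval $(v,w)$ with $\overline{(v,w)}$ compact, hence $(v_+, w_+)$ has compact closure in $\partial\tilde M\setminus\{\xi\}$; the weight $\phi_v(\eta)$ is continuous, hence bounded on this compact arc, the fiber volumes are locally finite, and $\mu_{x_0}$ is finite, so the defining integral is finite.

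The main work is (iv), and this is where I expect the real obstacle. The half-horosphere $H^u_+(v)$ for $v \in \tilde\Sigma$ maps under $P_H$ onto a half-open arc of $\partial\tilde M$ running from $v_+$ up to the center $\xi = v_-$ (exclusive); since $\mu_{x_0}$ is a finite measure on all of $\partial\tilde M$ with no atoms, the $\mu_{x_0}$-mass of this arc is finite, so finiteness of $\mu_H(H^u_+(v))$ is \emph{not} automatic — the divergence must come from the blow-up of the weight $\phi_v(\eta) = e^{\delta\beta_\eta(x_0,\pi(P_H^{-1}(\eta)))}$ as $\eta \to v_-$. The strategy is to show that as $\eta \to v_- = \xi$ along $H$, the base point $\pi(P_H^{-1}(\eta))$ escapes to infinity toward $\xi$ while staying on the horosphere, so $\beta_\eta(x_0, \pi(P_H^{-1}(\eta))) \to +\infty$; more precisely one compares $\beta_\eta(x_0, \cdot)$ with $\beta_{\xi}(x_0,\cdot)$ and uses that $\beta_{\xi}$ grows linearly along the horocycle as one approaches the center. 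Then one must combine this pointwise blow-up with a lower bound on the $\mu_{x_0}$-mass of shrinking neighborhoods of $\xi$: since we are in the conservative (divergence) case, $\xi$ lies in the radial limit set for $\mu_{x_0}$-a.e. direction and the shadow lemma gives $\mu_{x_0}(\mathcal{O}(B)) \asymp e^{-\delta \cdot (\text{depth})}$ for shadows of balls, which is precisely the decay rate that the exponential weight compensates, producing a divergent integral (a geometric-series-type estimate over a sequence of annular regions approaching $\xi$). Here I would lean on the shadow lemma and the conservativity/divergence-type hypothesis recalled in Section \ref{Sec:notation}; the subtlety is that one needs the estimate for \emph{horospherical} shadows (neighborhoods of $\xi$ in $\partial\tilde M$ cut out by the horosphere $H$) rather than metric balls, so a comparison between these two families of neighborhoods near $\xi$ is the technical heart of the argument. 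The symmetric statement for $H^u_-(v)$ follows identically, the orientation playing no role.
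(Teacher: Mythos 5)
Your treatment of (i)--(iii) is correct and essentially identical to the paper's: absence of atoms of $\mu_{x_0}$ gives (i), injectivity of $P_H$ in the absence of flat pieces plus the fact that $\mathrm{supp}\,\mu_{x_0}=\Lambda(\Gamma)=\partial\tilde{M}$ gives (ii), and compactness of $P_H(K)$ with boundedness of the weight gives (iii).

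The problem is (iv), and the clearest symptom is that your argument never uses the hypothesis $v\in\tilde{\Sigma}$: if your plan worked as written it would prove infinite mass of every half-horocycle centered at a limit point, which under the duality condition means every horocycle whatsoever, a much stronger claim than the lemma and precisely the kind of statement the presence of flats puts in doubt. Concretely, the two quantitative inputs you defer are not available in this generality. A Sullivan/Knieper shadow lemma with two-sided bounds is proved for compact rank $1$ manifolds, but here $M$ is only assumed to satisfy the duality condition with finite Bowen--Margulis measure; and even where shadow estimates hold, the lower bounds are obtained at radial limit points only along the scales at which the geodesic ray toward $\xi$ returns to a fixed compact set --- which is exactly the recurrence information encoded in $v\in\tilde{\Sigma}$ that you discard. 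Similarly, the asserted blow-up of $\beta_\eta(x_0,\pi(P_H^{-1}(\eta)))$ as $\eta\to\xi$ at a rate that exactly compensates an $e^{-\delta(\cdot)}$ decay of horospherical neighborhoods requires hyperbolic-type comparison estimates that fail, or at least are unproven, near flat strips; the ``technical heart'' you name (horospherical shadows versus metric shadows) is therefore not a routine verification but the whole difficulty, and it is left open.

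For comparison, the paper's proof of (iv) avoids Patterson--Sullivan estimates entirely and is dynamical: by (iii) one may assume $v\in Rec^1$; Lemma \ref{continuity_measure} gives a neighborhood $U$ of $v$ on which the mass of the half unit ball $(a_w,w)\subset B^u(w,1)$ is at least $\frac{1}{2}\mu_{H^u(v)}((a_v,v))$, and by $\Gamma$-invariance this persists on $\bigcup_{\gamma\in\Gamma}\gamma U$; choosing recurrence times $t_k\to-\infty$ and $\gamma_k\in\Gamma$ with $\gamma_k g_{t_k}v\to v$ and applying the scaling relation $\mu_{g_tH}=e^{\delta t}(g_t)_*\mu_H$ to the intervals $(a_k,g_{t_k}v)$ and $(g_{t_k}v,b_k)$ produces subsets of $H^u_-(v)$ and $H^u_+(v)$ of measure at least $\frac{1}{2}e^{-\delta t_k}\mu_{H^u(v)}((a_v,v))\to\infty$. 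If you want to salvage your route, you would have to prove a shadow-type lower bound valid along return times of the recurrent vector guaranteed by $v\in\tilde{\Sigma}$ --- at which point you are effectively reconstructing the recurrence argument above, so the dynamical proof is both shorter and the one that actually uses the hypothesis.
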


\begin{proof}
	(i) We know that $\mu_{x_0}$ has no point masses. If $w\in H$, $\mu_{x_0}(\{v_+\})=0$ directly implies that $\mu_H(\{v\})=0$.
	
	(ii) If $U\subset H$ is an open nonempty subset, $P_H(U)$ is also open and nonempty. So $\mu_{x_0}(P_H(U))>0$ because its support is $\Lambda(\Gamma)=\partial\tilde{M}$. Then $\mu_H(U)=\int_{P_H(U)}\phi_v \dd \mu_{x_0}>0$.
	
	(iii) If $K\subset H$ is compact, $P_H(K)$ is also compact. The function $\phi_v$ is bounded on $P_H(K)$. The volume part of the integral is bounded by the length of $K$. Then it is clear that $\mu_H(K)$ is finite.
	
	(iv) By (iii) it is clear that, for every $w\in H^u(v)$, the measure of $H^u_+(v)$ is infinite if and only if so is the measure of $H^u_+(w)$. So we can assume that $v$ is in $Rec^1$.
	
	Let $B^u(w,r)$ denote the open ball in $H^u(w)$ of center $w$ and radius $r>0$. The balls $B^u(w,1)$ have two boundary points $a_w,b_w\in H^u(w)$ that depend continuously on $w$ so that $B^u(w,1)=(a_w,b_w)$. In view of Lemma \ref{continuity_measure}, the function $w\mapsto \mu_{H^u(w)}((a_w,w))$ is continuous. The continuity at $v$ implies that there exists a neighborhood $U$ of $v$ in $\tilde{\Sigma}$ such that for all $w\in U$
	\begin{equation}\label{measure_bound}
		\mu_{H^u(w)}((a_w,w))\ge\frac{1}{2}\mu_{H^u(v)}((a_v,v)).
	\end{equation} 
	The inequality is in fact valid on $\cup_{\gamma \in \Gamma} \gamma U$ because the family of measures is $\Gamma$-invariant. 

	Since $v$ is recurrent, there is a sequence $t_k$ converging to $-\infty$ and isometries $\gamma_k \in \Gamma$ such that the the distance $d_1(g_{t_k}v, \gamma_k v)$ goes to $0$. For $k$ big enough, the vector $g_{t_k}v$ is in $\gamma_k U$, so Equation \ref{measure_bound} remains true if we replace $w$ by $g_{t_k}v$. Let $a_k, b_k$ be the points in $H^u(g_{t_k}v)$ such that $B^u(g_{t_k}v,1)=(a_k,b_k)$. Using the fact that the measures on horocycles expand exponentially, we obtain
	\begin{equation*}
		\mu_{H^u(v)}((g_{-t_k}a_k,v))=e^{-t_k}\mu_{H^u(g_{t_k}v)}((a_k,g_{t_k}v))\ge 
		\frac{1}{2} e^{-t_k} \mu_{H^u(v)}((a_v,v)).
	\end{equation*}
	This shows that in one half-horocycle there are subsets of arbitrarily large measure. We proceed analogously for the other half-horocycle, with $b_k$ instead of $a_k$.	
\end{proof}

We can now define a suitable parametrization of the horocyclic flow on the set $\tilde{\Sigma}$. Given $v\in \tilde{\Sigma}$, we consider the function $m_v:H^u_+(v)\rightarrow (0,+\infty)$ defined by $m_v(w):=\mu_{H^u(v)}((v,w))$. The map $m_v$ is well defined by properties (ii) and (iii) of Lemma \ref{properties_horocycles}, is continuous by (i), strictly increasing (with the order given by the orientation) by (ii) and surjective by (iv). Then $m_v$ is in fact a homeomorphism, because the domain and the codomain of the function are topologically the real line. For $s>0$, we set $h_s(v)=m_v^{-1}(s)$, so that the measure of the interval $(v,h_s(v))$ is $s$. There is a similar map on the negative half horocycle $H^u_-(v)$ that allows us to define the flow $h_s(v)$ for negative time, and we also set the obvious relation $h_0(v)=v$.

It is clear that the flow satisfies the group law, $h_{s_1}\circ h_{s_2}=h_{s_1+s_2}$, because of the additivity of the measure and property (i). For the same reasons, the measure of every interval $I\subset H^u(v)$ (hence, every measurable set) is preserved, $\mu_{H^u(v)}(h_s(I))=\mu_{H^u(v)}(I)$. Thanks to the product structure of the measure (Equation \ref{product_structure}), we deduce that $h_s$ preserves $\mu$. The expanding property of the measures on horocycles is transformed into the commutation relation $g_t\circ h_s = h_{se^{\delta t}} \circ g_t $ between the geodesic flow and the horocyclic flow. Only the continuity of $h_s$ remains to be proved.

\begin{lemma}
	The flow
	\begin{equation*}
		\begin{matrix}
		\RR\times \tilde{\Sigma} & \longrightarrow & \tilde{\Sigma}  \\
		(s,v) & \longmapsto & h_s(v)
		\end{matrix}
	\end{equation*}
	is continuous.
\end{lemma}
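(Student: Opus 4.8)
The plan is to establish sequential continuity, which is enough since $\RR\times\tilde\Sigma$ is metrizable. So I would fix a sequence $(s_n,v_n)\to(s_\infty,v_\infty)$ in $\RR\times\tilde\Sigma$, set $w_n:=h_{s_n}(v_n)$ and $w_\infty:=h_{s_\infty}(v_\infty)$, and aim to prove $w_n\to w_\infty$.

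The first and crucial step is to confine the points $w_n$ to a fixed relatively compact subset of $T^1\tilde M$. I would pick $R>|s_\infty|$, so that $|s_n|<R$ for $n$ large. Since $v_\infty\in\tilde\Sigma$, Lemma \ref{properties_horocycles}(iv) provides a radius $\rho>0$ such that the two endpoints $a_{v_\infty},b_{v_\infty}$ of the ball $B^u(v_\infty,\rho)=(a_{v_\infty},b_{v_\infty})$ satisfy $\mu_{H^u(v_\infty)}((a_{v_\infty},v_\infty))>R$ and $\mu_{H^u(v_\infty)}((v_\infty,b_{v_\infty}))>R$. The endpoints $a_w,b_w$ of $B^u(w,\rho)$ depend continuously on $w$ (this is used for $\rho=1$ in the proof of Lemma \ref{properties_horocycles}(iv), and the same argument works for any $\rho$), so $a_{v_n}\to a_{v_\infty}$ and $b_{v_n}\to b_{v_\infty}$, and Lemma \ref{continuity_measure} then gives $\mu_{H^u(v_n)}((a_{v_n},v_n))\to\mu_{H^u(v_\infty)}((a_{v_\infty},v_\infty))>R$, and likewise on the positive side. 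Hence, for $n$ large, the sub-interval of $H^u(v_n)$ issued from $v_n$ carrying $\mu_{H^u(v_n)}$-mass $|s_n|$ cannot reach $a_{v_n}$ or $b_{v_n}$, so $w_n\in(a_{v_n},b_{v_n})=B^u(v_n,\rho)$. Since $v_n\to v_\infty$, these balls all lie in a bounded, hence relatively compact, ball around $v_\infty$; call its closure $C$.

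The second step is to identify subsequential limits. Extracting via compactness a convergent subsequence $w_{n_k}\to w'\in C$, I would first check $w'\in H^u(v_\infty)$: the map $v\mapsto v_-$ is continuous, so $(w')_-=\lim(v_{n_k})_-=(v_\infty)_-$, and joint continuity of the Busemann cocycle gives $\beta_{(v_\infty)_-}(\pi(v_\infty),\pi(w'))=\lim\beta_{(v_{n_k})_-}(\pi(v_{n_k}),\pi(w_{n_k}))=0$. Since $v_\infty\in\tilde\Sigma$ and $\tilde\Sigma$ is a union of horocycles, $w'\in H^u(v_\infty)\subset\tilde\Sigma$, so $(v_\infty,w')$ lies in the domain of the map of Lemma \ref{continuity_measure} and $(v_{n_k},w_{n_k})\to(v_\infty,w')$ there. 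Passing to the limit in $\mu_{H^u(v_{n_k})}((v_{n_k},w_{n_k}))=|s_{n_k}|$ yields $\mu_{H^u(v_\infty)}((v_\infty,w'))=|s_\infty|$, with $w'$ on the correct side of $v_\infty$ (the half-horocycle stabilizes with the sign of $s_\infty$ when $s_\infty\neq0$; when $s_\infty=0$, full support of $\mu_{H^u(v_\infty)}$ from Lemma \ref{properties_horocycles}(ii) forces $w'=v_\infty$). Hence $m_{v_\infty}(w')=s_\infty$, i.e. $w'=h_{s_\infty}(v_\infty)=w_\infty$. As every subsequence of $(w_n)$ admits a further subsequence tending to $w_\infty$, and $(w_n)$ eventually lives in the compact set $C$, this gives $w_n\to w_\infty$.

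The main obstacle is the compactness step: without the a priori bound, $h_{s_n}(v_n)$ could in principle run off to infinity along the $n$-th horocycle. Overcoming it is precisely where the infiniteness of the half-horocycle measures (Lemma \ref{properties_horocycles}(iv)) and the continuity of the horocycle measures (Lemma \ref{continuity_measure}) must be combined, through the continuous dependence of the metric balls $B^u(\cdot,\rho)$ on the base vector. Everything afterwards is a routine limiting argument using only the global continuity of $v\mapsto v_-$ and of the Busemann cocycle.
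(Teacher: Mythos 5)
Your argument is correct, and it takes a somewhat different route from the paper's. The paper's proof picks, by continuity of the horocycle foliation, auxiliary vectors $w_k\in H^u(v_k)$ converging to $h_s(v)$, observes via Lemma \ref{continuity_measure} that the measure of the interval $(w_k,h_{s_k}(v_k))$ tends to $0$, and then rules out non-convergence by a contradiction: an accumulation point $\zeta$ of $h_{s_k}(v_k)$ in $H^u(v)\cup\{v_-\}$ would bound a nonempty interval of zero measure. Your proof instead makes the ``no escape to infinity'' step explicit and quantitative: you use the infinite mass of the half-horocycles (Lemma \ref{properties_horocycles}(iv)) together with Lemma \ref{continuity_measure} and the continuous dependence of the intrinsic balls $B^u(\cdot,\rho)$ (the same fact the paper uses in the proof of (iv)) to trap $h_{s_n}(v_n)$ in a fixed compact set, and then you identify every subsequential limit directly by passing to the limit in the defining identity $\mu_{H^u(v_n)}((v_n,h_{s_n}(v_n)))=\modul{s_n}$, using continuity of $v\mapsto v_-$ and of the Busemann cocycle to see that the limit lies on $H^u(v_\infty)\subset\tilde{\Sigma}$. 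What your approach buys is that it avoids the slightly delicate points of the paper's contradiction step (that the accumulation point must lie in $H^u(v)\cup\{v_-\}$, and the application of measure continuity to a possibly infinite interval); what it costs is the extra input of Lemma \ref{properties_horocycles}(iv) and of the exhaustion of a horocycle by its intrinsic balls. One point you only gesture at is that the subsequential limit $w'$ lies on the correct side of $v_\infty$ when $s_\infty\neq 0$; this can be completed, for instance, by noting that horocycles in $\tilde{\Sigma}$ contain no flat pieces, so $P_{H^u(v_\infty)}$ is injective and the cyclic order of positive endpoints on $\partial\tilde{M}$ passes to the limit — a point the paper's own proof glosses over in the same way, so it is a detail rather than a gap.
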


\begin{proof}
	Let $s\in \RR$ and $v\in \tilde{\Sigma}$ and consider sequences $s_k\to s$ and $v_k \to v$. We know that the horocycles $H^u(w)$ depend continuously on $w$, so for each $k$ there exists a vector $w_k\in H^u(v_k)$ such that the sequence $\{w_k\}_k$ converges to $h_s(v)$. By Lemma \ref{continuity_measure}, we have $\mu_{H^u(v_k)}((v_k,w_k))\to \mu_{H^u(v)}((v,h_s(v)))=\modul{s}$. We deduce then that the measures of the intervals $(w_k,h_{s_k}(v_k))$ tend to $0$. If the distance between $w_k$ and $h_{s_k}(v_k)$ tends to $0$ too, then we obtain $h_{s_k}(v_k)\to h_s(v)$ so the flow is continuous at $(s,v)$.
	
	Otherwise, we get a contradiction. To see this, suppose that, for some $\varepsilon>0$ and subsequence $k_i$, the Riemannian distance $d_1(w_{k_i},h_{s_{k_i}}(v_{k_i}))$ is greater than $\varepsilon$. Then, since $w_{k_i}\to h_s(v)$, for $i$ big enough $h_{s_{k_i}}(v_{k_i})$ is at distance greater than $\varepsilon/2$ from $h_s(v)$. But the sequence $h_{s_{k_i}}(v_{k_i})$ must accumulate at some point $\zeta$ in $H^u(v)\cup \{v_-\}$, outside of a ball centered at $h_s(v)$. Again by the continuity of the measure, it follows that $\mu_{H^u(v)}((h_s(v),\zeta))=0$, which is impossible because the interval is nonempty.
\end{proof}

\subsection{Unique ergodicity of the horocyclic flow on $\Sigma$}

To study the ergodic properties of the horocyclic flow we introduce the Birkhoff averages. Let $f:T^1M \rightarrow \RR$ be a Borel function and $\tilde{f}:T^1\tilde{M}\rightarrow \RR$ its lift. For a number $R>0$ and $v$ in $\tilde{\Sigma}$, we define
\begin{equation*}
	M_R(f)(v):=\frac{1}{R}\int_0^R\tilde{f}(h_s(v))ds.
\end{equation*}
A simple computation using the commutation relation between the geodesic and the horocyclic flow shows that $M_R(f\circ g_t)=M_{Re^{\delta t}}(f)\circ g_t$. 

Moreover, if we suppose that $f$ is bounded and uniformly continuous, the equidistribution under the action of the geodesic flow we showed in Theorem \ref{theorem_timeed} implies that the Birkhoff averages $M_1(f\circ g_t)$ converge pointwise to $\int f \dd \mu/\mu(T^1 M)$ when the time $t$ goes to $+\infty$. However, we need to understand the behavior of $M_R(f)$ when $R$ goes to infinity, that is to say, the equidistribution of horocycles in length. To do this we will use the relation $M_1(f\circ g_t)=M_{e^{\delta t}}(f)\circ g_t$ and some kind of uniform convergence of the averages $M_1(f\circ g_t)$ towards the average of $f$ on the unitary tangent bundle of the manifold $M$, which we are going to prove.

It is clear from the continuity of the measures on horocycles (Lemma \ref{continuity_measure}) that the function $M_1(f\circ g_t)$ is continuous on $\tilde{\Sigma}$. We can prove the following improved result.

\begin{proposition}\label{global_equicont}
	Let $M$ be an orientable rank $1$ complete connected Riemannian surface with nonpositive curvature satisfying the duality condition. Let $f$ be a bounded and uniformly continuous function on $T^1M$. Then the family of functions $\{M_1(f\circ g_t)\}_{t>0} $ is equicontinuous at every vector of $\tilde{\Sigma}$.
\end{proposition}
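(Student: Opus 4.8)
The plan is to fix $v_0\in\tilde{\Sigma}$ and $\varepsilon>0$ and to produce a neighbourhood $W$ of $v_0$ in $\tilde{\Sigma}$ on which all the functions $M_1(f\circ g_t)$, $t>0$, differ from their value at $v_0$ by at most a fixed multiple of $\varepsilon$. Recall that $v_0$ has rank $1$ by Lemma \ref{lemma1} and, by the duality hypothesis, is nonwandering. Since $\widetilde{f\circ g_t}=\tilde{f}\circ g_t$, one has $M_1(f\circ g_t)(v)=\int_0^1\tilde{f}(g_th_sv)\,\dd s$. The first step is to turn this into an integral over the boundary: every horocycle contained in $\tilde{\Sigma}$ has no flat piece, so $P_{H^u(v)}$ is a homeomorphism onto $\partial\tilde{M}\setminus\{v_-\}$, and — all the vectors in sight having rank $1$ — the volume factor in the definition of $\mu_{H^u(v)}$ equals $1$, exactly as in the proof of Proposition \ref{prop_local_ed}. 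Writing $J_v=\{(h_sv)_+:0\le s\le1\}$ and using that $s\mapsto(h_sv)_+$ pushes the Lebesgue measure of $[0,1]$ to $\phi_v\,\mu_{x_0}$ on $J_v$ (with $\phi_v$ the density from the definition of $\mu_{H^u(v)}$), this gives
\[
M_1(f\circ g_t)(v)=\int_{J_v}\tilde{f}\big(g_t\,P_{H^u(v)}^{-1}(\eta)\big)\,\phi_v(\eta)\,\dd\mu_{x_0}(\eta),
\]
and the task is to compare this expression for $v$ near $v_0$ with the same expression for $v_0$.

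Next I would dispose of the ``soft'' errors. As $v\to v_0$, the continuity of the horocyclic flow proved above, together with the continuity of the horocycle foliation, shows $h_sv\to h_sv_0$ uniformly for $s\in[0,1]$; hence $J_v\to J_{v_0}$, and for $v$ in a suitable neighbourhood $W_0$ of $v_0$ the arcs $J_v,J_{v_0}$ all lie in a fixed compact $J^\ast\subset\partial\tilde{M}\setminus\{(v_0)_-\}$ with $\mu_{x_0}(J_v\triangle J_{v_0})$ arbitrarily small (recall $\mu_{x_0}$ has no atoms). On $W_0$ the map $(v,\eta)\mapsto\pi P_{H^u(v)}^{-1}(\eta)$ takes values in a fixed compact of $\tilde{M}$ and is uniformly continuous in $v$, so $\phi_v$ is uniformly bounded on $J^\ast$ and $\phi_v\to\phi_{v_0}$ uniformly there. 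Splitting the integrals into the part over $J:=J_v\cap J_{v_0}$ and the part over $J_v\triangle J_{v_0}$, these facts reduce the problem — at the cost of an error $\le C_1\varepsilon$ uniform in $t$, with $C_1$ depending only on $\norm{f}_\infty$ and $\sup_{J^\ast}\phi$ — to bounding
\[
\int_{J}\Big|\tilde{f}\big(g_t\,P_{H^u(v)}^{-1}(\eta)\big)-\tilde{f}\big(g_t\,P_{H^u(v_0)}^{-1}(\eta)\big)\Big|\,\phi_{v_0}(\eta)\,\dd\mu_{x_0}(\eta)
\]
uniformly in $t>0$.

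This last integral is the heart of the matter and the step I expect to be the main obstacle. For $\eta\in J$ put $w=P_{H^u(v)}^{-1}(\eta)$ and $w_0=P_{H^u(v_0)}^{-1}(\eta)$: these are rank $1$ vectors with the \emph{same} positive endpoint $\eta$ (so they lie on a common weak stable leaf), and $d_1(w,w_0)$ is small, uniformly in $\eta\in J^\ast$, once $v$ is close to $v_0$. What is needed is a \emph{uniform forward contraction of weak stable leaves}: a modulus $\omega$ with $\omega(\rho)\to0$ as $\rho\to0$ such that $d_1(g_tw,g_tw_0)\le\omega\big(d_1(w,w_0)\big)$ for every $t\ge0$, uniformly over $w,w_0$ in a fixed neighbourhood of $v_0$ sharing a positive endpoint. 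My plan to obtain this is: set $c=\beta_\eta(\pi w,\pi w_0)$, so $|c|\le d_1(w,w_0)$; for every $t\ge0$ the footpoints of $g_tw$ and $g_{t+c}w_0$ lie on one and the same Busemann horosphere centred at $\eta$, and, being tangent to rays asymptotic to $\eta$, they stay within bounded distance; by convexity of the distance function in nonpositive curvature, $t\mapsto d(\pi g_tw,\pi g_{t+c}w_0)$ is then a bounded convex function on $[0,\infty)$, hence nonincreasing, so it never exceeds $d(\pi w,\pi w_0)+|c|$. Upgrading this control of footpoints to control of the full Sasaki distance is the delicate part: I would use the corresponding nonincrease of stable Jacobi fields — equivalently, the continuity of the unit normal field of the stable horospheres of $\eta$ — together with the fact that $\eta$ is a visibility point (the surface is nonflat and satisfies the duality condition, as recalled earlier), so that the unit vector at a point pointing to $\eta$ depends continuously on the point, and with the rank $1$ assumption, which forbids flat strips asymptotic to $\eta$ along the geodesics of $w$ and $w_0$. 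Granting the displayed estimate, the uniform continuity of $\tilde{f}$ makes $|\tilde{f}(g_tw)-\tilde{f}(g_tw_0)|<\varepsilon$ for all $t\ge0$ as soon as $v$ is close enough to $v_0$, so the integral above is at most $\varepsilon\,\mu_{x_0}(J^\ast)\sup_{J^\ast}\phi$.

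Combining the three contributions, for $v$ in a small enough neighbourhood $W$ of $v_0$ in $\tilde{\Sigma}$ one obtains $|M_1(f\circ g_t)(v)-M_1(f\circ g_t)(v_0)|\le C\varepsilon$ for every $t>0$, with $C$ independent of $t$ and of $\varepsilon$; as $\varepsilon$ was arbitrary, this is the claimed equicontinuity at $v_0$. (Together with the pointwise convergence $M_1(f\circ g_t)\to\int f\,\dd\mu/\mu(T^1M)$ as $t\to+\infty$ furnished by Theorem \ref{theorem_timeed}, this equicontinuity is what will later upgrade pointwise to locally uniform convergence.)
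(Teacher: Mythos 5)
Your proposal is correct and takes essentially the same route as the paper: rewrite $M_1(f\circ g_t)$ as an integral over the boundary interval against $\phi_v\,\dd\mu_{x_0}$, switch the domain of integration to the fixed interval $(v_+,h_1(v)_+)$ at the cost of an error controlled by $\mu_{x_0}$ of the symmetric difference times a uniform bound on the density, and conclude with the uniform continuity of $\tilde{f}$ together with the fact that two vectors sharing their positive endpoint do not separate under $g_t$, $t\ge 0$. The only divergence is cosmetic: the paper simply invokes this non-expansion along weak stable leaves (exactly as in the proof of Proposition \ref{prop_local_ed}), whereas you flag it as the delicate point and sketch a proof via convexity of the distance between asymptotic geodesics, which is the standard justification and needs no extra visibility or flat-strip input.
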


\begin{proof}
	Let $v$ be a vector in $\tilde{\Sigma}$. The average of the horocyclic flow can be written explicitly as
	\begin{equation*}
		M_1(f\circ g_t)(w)=\int_{(w,h_1(w))}\tilde{f} \circ g_t \,\dd \mu_{H^u(w)} =	
	\end{equation*}
	\begin{equation}\label{equation_key}
=	\int_{(w_+,h_1(w)_+)} \tilde{f} \circ g_t (P_{H^u(w)}^{-1}(\eta))	e^{\delta \beta_\eta(x_0,\,\pi(P_{H^u(w)}^{-1}(\eta)))} \dd \mu_{x_0}(\eta).
	\end{equation}
	
	Fix $\varepsilon >0$. We consider a relatively compact neighborhood $U$ of $v$ such that, for all $w\in U$,
	\begin{equation*}
		\mu_{x_0}((v_+,h_1(v)_+)\triangle (w_+,h_1(w)_+))<\varepsilon.
	\end{equation*}
	Let $C$ be a uniform bound of $\exp({\delta \beta_\eta(x_0,\,P_{H^u(w)}^{-1}(\eta))})$ for $w\in U$ and $\eta$ in a compact neighborhood of $\overline{(v_+,h_1(v)_+)}$. When $w$ approaches $v$, the set $(w_+,h_1(w)_+)$ will be contained in this compact neighborhood. Then we can change the domain of integration in Equation \ref{equation_key} to $(v_+,h_1(v)_+)$ with an error of $\varepsilon \norm{f}_\infty C$ at most.
	
	By the uniform continuity of $\tilde{f}$, there is a number $r>0$ such that $|\tilde{f}(w)-\tilde{f}(w')|<\varepsilon $ if $d_1(w,w')<r$. If $w$ is close enough to $v$, for all $\eta \in (v_+,h_1(v)_+)$, $P_{H^u(w)}^{-1}  (\eta)$ is at distance less than $r$ from $ P_{H^u(v)}^{-1}(\eta)$. Applying the geodesic flow $g_t$, $t\ge 0$ to these two vectors, their distance does not increase. Hence, when $w$ is close to $v$,
	\begin{equation*}
		\forall t\ge 0,\,\forall \eta\in (v_+,h_1(v)_+), \quad |\tilde{f}(g_t(P_{H^u(w)}^{-1} (\eta)))-\tilde{f}(g_t(P_{H^u(v)}^{-1} (\eta)))|<\varepsilon .
	\end{equation*}
	This is essentially the same we did in property (ii) of the proof of Proposition \ref{prop_local_ed}. We can also control the difference between $\exp({\delta \beta_\eta(x_0,\,P_{H^u(w)}^{-1}(\eta))})$ and $\exp({\delta \beta_\eta(x_0,\,P_{H^u(v)}^{-1}(\eta))})$ if we consider a $w$ close enough to $v$. So the values of the functions at $w$ are close to the values at $v$ uniformly in $t$ when the two vectors are close. This shows that $\{M_1(f\circ g_t)\}_{t>0} $ is equicontinuous at $v$.
\end{proof}

We also observe that the function $M_1(f\circ g_t)$ is bounded by the uniform norm $\norm{f}_\infty$. Both the set $\tilde{\Sigma}$ and the functions $M_R(f):\tilde{\Sigma} \rightarrow \RR$ are invariant under $\Gamma$, so they descend respectively to a set $\Sigma\subset T^1 M$ and some functions $\bar{M}_R(f): \Sigma \rightarrow \RR$. We will apply the Arzelà-Ascoli theorem on the space of continuous functions $C(K)$ over a compact set $K\subset \Sigma$. For every uniformly continuous and bounded function $f:T^1 M\rightarrow \RR$, the family $\{\bar{M}_1(f\circ g_t)|_K\}_{t>0} \subset C(K)$ is equicontinuous and uniformly bounded, so it is a relatively compact subset of $C(K)$ in the uniform convergence topology. This is enough to prove that $\bar{M}_1(f\circ g_t)|_K$ converges uniformly to $\int f \dd \mu/\mu(T^1M)$ when $t\to +\infty$, or we get a contradiction otherwise. Indeed, if we suppose that there is not uniform convergence, there will exist a constant $\varepsilon >0$, a sequence $t_n\to +\infty$ and points $w_n$ in $K$ such that
\begin{equation}\label{contradiction}
	\left|\bar{M}_1(f\circ g_{t_n})(w_n)-\frac{\int f \dd \mu}{\mu(T^1M)} \right|\ge \varepsilon.
\end{equation}
However, because of the sequential compactness of the closure of $\{\bar{M}_1(f\circ g_t)|_K\}_{t>0}$, there exists a subsequence $t_{n_k}$ where $\bar{M}_1(f\circ g_{t_{n_k}})|_K$ converges uniformly to some function $\varphi$ in $C(K)$. In particular, $\bar{M}_1(f\circ g_{t_{n_k}})|_K$ converges pointwise to $\varphi$, but also to $\int f \dd \mu/\mu(T^1M)$ as mentioned above (consequence of Theorem \ref{theorem_timeed}), so $\varphi$ is constant and equal to $\int f \dd \mu/\mu(T^1M)$. But $\bar{M}_1(f\circ g_{t_{n_k}})$ converging to $\int f \dd \mu/\mu(T^1M)$ in the uniform norm contradicts Equation \ref{contradiction}. We have shown:

\begin{proposition}\label{uniform_convergence}
	Let $M$ be an orientable rank $1$ complete connected Riemannian surface with nonpositive curvature satisfying the duality condition. Assume that the Bowen-Margulis measure $\mu$ is finite. Let $f$ be a bounded and uniformly continuous function on $T^1M$. Then the functions $\bar{M}_1(f\circ g_{t}):\Sigma\rightarrow \RR$ converge uniformly on compact sets to the constant $\int f \dd \mu/\mu(T^1M)$ when the time $t$ tends to $+\infty$.
\end{proposition}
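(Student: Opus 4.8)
The plan is to carry out the compactness argument already outlined in the paragraph preceding the statement, namely to combine the equicontinuity of Proposition \ref{global_equicont} with the pointwise convergence coming from Theorem \ref{theorem_timeed} via Arzelà–Ascoli. Write $\lambda:=\int f\,\dd\mu/\mu(T^1M)$. First I would collect the two inputs on the universal cover: by Proposition \ref{global_equicont} the family $\{M_1(f\circ g_t)\}_{t>0}$ is equicontinuous at every point of $\tilde\Sigma$, and each $M_1(f\circ g_t)$ is bounded by $\norm{f}_\infty$ in absolute value. Since $\tilde\Sigma$ and the functions $M_R(f)$ are $\Gamma$-invariant and the covering $T^1\tilde M\to T^1M$ is a local isometry, these statements descend: the functions $\bar M_1(f\circ g_t):\Sigma\to\RR$ are equicontinuous at every point of $\Sigma$ and uniformly bounded by $\norm{f}_\infty$.

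Next, fix a compact $K\subset\Sigma$. Pointwise equicontinuity on the compact set $K$ upgrades to uniform equicontinuity of the family on $K$, so by the Arzelà–Ascoli theorem the family $\{\bar M_1(f\circ g_t)|_K\}_{t>0}$ is relatively compact in $C(K)$ for the uniform topology. I would then argue by contradiction: if uniform convergence to the constant $\lambda$ failed, there would exist $\varepsilon>0$, a sequence $t_n\to+\infty$ and points $w_n\in K$ with $|\bar M_1(f\circ g_{t_n})(w_n)-\lambda|\ge\varepsilon$, and by relative compactness a subsequence $t_{n_k}$ along which $\bar M_1(f\circ g_{t_{n_k}})|_K$ converges uniformly, hence pointwise, to some $\varphi\in C(K)$.

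The key step is to identify $\varphi$ with $\lambda$, which is exactly where Theorem \ref{theorem_timeed} enters. For any $w\in\tilde\Sigma$, the definition of the horocyclic parametrization rewrites the Birkhoff average as an integral over a piece of horosphere,
\[
M_1(f\circ g_t)(w)=\int_0^1\tilde f\big(g_t(h_s(w))\big)\,\dd s=\int_{(w,h_1(w))}\tilde f\circ g_t\,\dd\mu_{H^u(w)},
\]
and $(w,h_1(w))$ is an open subset of $H^u(w)$ with $\mu_{H^u(w)}((w,h_1(w)))=1$. The hypotheses of Theorem \ref{theorem_timeed} hold in the present setting: $\mu$ is finite by assumption, a closed rank $1$ geodesic exists automatically under the duality condition, the geodesic flow is topologically mixing on $\Omega=T^1M$ (again from duality, cf. \cite[Theorem 6.3]{Eberlein73}), and since $w\in\tilde\Sigma$ has rank $1$ and, by duality, is nonwandering, $H^u(w)$ is centered at $\Lambda(\Gamma)=\partial\tilde M$. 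Hence $M_1(f\circ g_t)(w)\to\lambda$, and descending, $\bar M_1(f\circ g_t)\to\lambda$ pointwise on $\Sigma$. Applying this along $t_{n_k}$ at each point of $K$ shows $\varphi\equiv\lambda$; then $\sup_{w\in K}|\bar M_1(f\circ g_{t_{n_k}})(w)-\lambda|\to0$, contradicting $|\bar M_1(f\circ g_{t_{n_k}})(w_{n_k})-\lambda|\ge\varepsilon$ for large $k$ since $w_{n_k}\in K$. This gives uniform convergence on $K$, hence on every compact subset of $\Sigma$.

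I expect the only point requiring care is the verification that Theorem \ref{theorem_timeed} genuinely applies to the intervals $(w,h_1(w))$ in the surface/duality setting — that these are open subsets of positive finite $\mu_{H^u(w)}$-measure of horospheres centered at the limit set, and that the topological mixing hypothesis is in force — together with the routine but necessary remarks that equicontinuity descends from $\tilde\Sigma$ to $\Sigma$ and that pointwise equicontinuity on a compact set yields the uniform equicontinuity needed for Arzelà–Ascoli. Beyond that, the argument is entirely soft.
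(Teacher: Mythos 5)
Your argument is correct and is essentially the paper's own proof: equicontinuity (Proposition \ref{global_equicont}) plus the uniform bound $\norm{f}_\infty$ give relative compactness in $C(K)$ by Arzelà--Ascoli, and the pointwise convergence supplied by Theorem \ref{theorem_timeed} (applied to the intervals $(w,h_1(w))$, which are open of $\mu_{H^u(w)}$-measure $1$ on horocycles centered at $\Lambda(\Gamma)=\partial\tilde M$) forces every uniform limit point to be the constant $\lambda$, yielding the contradiction. The extra verifications you flag (hypotheses of Theorem \ref{theorem_timeed} under duality, descent of equicontinuity from $\tilde\Sigma$ to $\Sigma$) are exactly the points the paper handles in the surrounding discussion, so nothing is missing.
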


Finally, we state Theorem \ref{unique_ergodi} again and prove it with the help of the Birkhoff averages. Recall that $\Sigma$ is the set of vectors whose horosphere contains a rank $1$ recurrent vector, and $\Sigma$ has full $\mu$-measure in $T^1 M$.

\begin{theorem}
	Let $M$ be an orientable rank $1$ complete connected Riemannian surface with nonpositive curvature satisfying the duality condition. Assume that the Bowen-Margulis measure $\mu$ is finite. Then every finite Borel measure on $\Sigma$ invariant under the horocyclic flow $h_s$ is a constant multiple of the Bowen-Margulis measure $\mu|_\Sigma$ restricted to $\Sigma$.
\end{theorem}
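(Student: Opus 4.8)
The plan is to prove that, for every bounded uniformly continuous function $f$ on $T^1M$,
\[\int_\Sigma f\,\dd\nu=\frac{\nu(\Sigma)}{\mu(T^1M)}\int_{T^1M}f\,\dd\mu ,\]
and then to conclude: extending $\nu$ to a finite Borel measure on $T^1M$ by zero outside $\Sigma$, and recalling that $\mu$ is carried by $\Sigma$, this identity forces $\nu=\frac{\nu(\Sigma)}{\mu(T^1M)}\mu|_\Sigma$, since the bounded uniformly continuous functions (which include the bounded Lipschitz ones) form a determining class for finite Borel measures on the metric space $T^1M$. Set $\lambda:=\int_{T^1M}f\,\dd\mu/\mu(T^1M)$. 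Because $\nu$ is $h_s$-invariant and $f$ is bounded, Fubini gives $\int_\Sigma \bar M_R(f)\,\dd\nu=\int_\Sigma f\,\dd\nu$ for all $R>0$, and the Birkhoff ergodic theorem for the flow $h_s$ on the finite measure space $(\Sigma,\nu)$ provides $f^{*}\in L^1(\nu)$ with $\bar M_R(f)\to f^{*}$ $\nu$-almost everywhere as $R\to+\infty$, whence $\int_\Sigma f^{*}\,\dd\nu=\int_\Sigma f\,\dd\nu$ by dominated convergence. It therefore suffices to show $f^{*}=\lambda$ $\nu$-almost everywhere, and for this it is enough to establish the following pointwise statement: for every $v\in\Sigma$ there is a sequence $t_k\to+\infty$ such that $\bar M_{e^{\delta t_k}}(f)(v)\to\lambda$. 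Indeed, at $\nu$-a.e.\ $v$ the full limit $\lim_{R\to+\infty}\bar M_R(f)(v)=f^{*}(v)$ exists, so it must coincide with the subsequential limit $\lambda$ taken along $R=e^{\delta t_k}$, giving $f^{*}=\lambda$ $\nu$-a.e.\ and $\int_\Sigma f\,\dd\nu=\lambda\,\nu(\Sigma)$.

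To prove the pointwise statement, lift $v$ to $\tilde v\in\tilde\Sigma$. By the very definition of $\tilde\Sigma$ there is a vector $u\in Rec^1$ with $\tilde v\in H^u(u)$, and $H^u(u)=W^u(u)$ by Lemma \ref{lemma1}, so $d_1(g_t u,g_t\tilde v)\to 0$ as $t\to-\infty$. As $u$ is negatively recurrent (as in the proof of Lemma \ref{lemma1}), choose $s_k\to-\infty$ and $\gamma_k\in\Gamma$ with $\gamma_k g_{s_k}u\to u$; then $w_k:=\gamma_k g_{s_k}\tilde v\to u$ as well by the triangle inequality. Put $t_k:=-s_k\to+\infty$. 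Using the commutation relation $M_R(f\circ g_t)=M_{Re^{\delta t}}(f)\circ g_t$ with $R=1$, in the form $M_{e^{\delta t_k}}(f)=M_1(f\circ g_{t_k})\circ g_{-t_k}$, together with the $\Gamma$-invariance of the function $M_1(f\circ g_{t_k})$ and the fact that $\bar M_R(f)$ is the descent of $M_R(f)$, we get
\[\bar M_{e^{\delta t_k}}(f)(v)=M_{e^{\delta t_k}}(f)(\tilde v)=M_1(f\circ g_{t_k})\bigl(g_{-t_k}\tilde v\bigr)=M_1(f\circ g_{t_k})(w_k).\]
Since $\tilde\Sigma$ is $\Gamma$- and $g_t$-invariant we have $w_k\in\tilde\Sigma$, and $w_k\to u\in\tilde\Sigma$; hence the projections of the $w_k$ together with the projection of $u$ form a compact subset $K\subset\Sigma$, so Proposition \ref{uniform_convergence} yields $\sup_K\modul{\bar M_1(f\circ g_{t_k})-\lambda}\to 0$, and therefore $M_1(f\circ g_{t_k})(w_k)\to\lambda$. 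This is the desired pointwise statement.

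The crux of the argument — and the step demanding the most care — is exactly this reconciliation. The equidistribution results of the first part only give that $\bar M_R(f)$ converges to $\lambda$ uniformly on compact subsets of $\Sigma$, whereas a general $v\in\Sigma$ need not have a backward geodesic orbit confined to a compact set; in fact $\bar M_R(f)(v)\to\lambda$ for \emph{every} $v$ would be false in the presence of escape of mass. What makes things work is that one only needs convergence along \emph{some} sequence of lengths tending to infinity, which the recurrence of the rank $1$ vector lying on the horocycle of $v$ provides — this is precisely why $\tilde\Sigma$ was defined as the union of horospheres through rank $1$ recurrent vectors — while Birkhoff's theorem forces the genuine limit of $\bar M_R(f)(v)$ to be $\lambda$. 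Some minor additional care is needed with the measurability of $\Sigma$ and with working through the conditional expectation $f^{*}$ rather than invoking an ergodic decomposition, but these are routine.
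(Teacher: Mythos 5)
Your proof is correct and follows essentially the same route as the paper: in both, the heart of the argument is the pointwise statement that for each $v\in\Sigma$ the averages $\bar{M}_{e^{\delta t_k}}(f)(v)$ tend to $\lambda$ along times supplied by the negative recurrence of a rank $1$ recurrent vector on the horocycle of $v$, obtained from Proposition \ref{uniform_convergence} on the compact set formed by the backward orbit points together with their limit, and this is then combined with the Birkhoff theorem to identify the invariant measure. The only (minor, and in fact welcome) difference is at the bookkeeping stage: the paper assumes $\nu$ ergodic and leaves the reduction of a general finite invariant measure implicit, whereas you handle an arbitrary finite invariant measure directly via the limit function $f^{*}$ and the fact that bounded uniformly continuous functions determine finite Borel measures, thereby avoiding any appeal to ergodic decomposition.
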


\begin{proof}
	Firstly, let us prove that, for every bounded and uniformly continuous function $f:T^1 M\rightarrow \RR$ and for every vector $v$ in $\Sigma$, there exists a sequence $t_n\to +\infty$ such that the Birkhoff integral $\bar{M}_{e^{t_n}}(f)(v)$ tends to $\lambda:=\int f \dd \mu/\mu(T^1M)$.
	
	There is a recurrent vector $w$ in the unstable horocycle of $v$, since $v$ is in $\Sigma$. Let $t_n$ be a sequence tending to $+\infty$ such that $g_{-t_n}(w)\to w$. Then obviously $g_{-t_n}(v)$ also tends to $w$. We consider the compact set $K:=\{g_{-t_n}(v)\}_{n\ge 0}\cup \{w\}\subset \Sigma$. By Proposition \ref{uniform_convergence}, the functions $\bar{M}_1(f\circ g_{t})$ converge uniformly on $K$ to the global average $\lambda$ of $f$. Therefore, using the time-scale relation, we have
	\begin{equation*}
		|\bar{M}_{e^{t_n}}(f)(v)-\lambda|=|\bar{M}_1(f\circ g_{t_n})(g_{-t_n}(v))-\lambda|\le
		\sup_{u\in K}|\bar{M}_1(f\circ g_{t_n})(u)-\lambda|\xrightarrow{n\to +\infty}0.
	\end{equation*}
	
	We can now prove that the restriction $\mu|_\Sigma$ of $\mu$ to $\Sigma$ is the unique measure on $\Sigma$ invariant under $h_s$, up to a multiplicative constant. Suppose that $\nu$ is an ergodic $h_s$-invariant probability measure on $\Sigma$. By the Birkhoff ergodic theorem, for every bounded and uniformly continuous function $f:\Sigma\rightarrow\RR$, for $\nu$-a.e. $v$ in $\Sigma$, we have
	\begin{equation*}
		\bar{M}_R(f)(v)=\frac{1}{R}\int_0^R f(h_s(v))\dd s\xrightarrow{R\to +\infty}\int_{\Sigma} f \dd \nu.
	\end{equation*}
	We take $v$ one of the points of $\Sigma$ where $\bar{M}_R(f)$ converges to $\int f \dd \nu$. We can extend $f$ to a bounded and uniformly continuous function $\hat{f}$ on $T^1 M$, because $\Sigma$ is dense in $T^1 M$. As we have seen, there is a sequence $R_n=e^{t_n}$ where $\bar{M}_R(\hat{f})(v)=\bar{M}_R(f)(v)$ tends to $\lambda$ as well. So we obtain
	\begin{equation*}
		\int_{\Sigma} f \dd \nu =\lambda =\frac{\int_{T^1M} f\dd \mu}{\mu(T^1 M)}
		=\frac{\int_{\Sigma} f\dd \mu}{\mu(\Sigma)},
	\end{equation*}
	because $\Sigma$ has full $\mu$-measure. We have concluded that $\nu$ is equal to the normalization of $\mu|_\Sigma$.
\end{proof}

\subsection{Alternative proof of the unique ergodicity}

We would like to point out another way to prove Proposition \ref{uniform_convergence}, which does not require the equidistribution of horocycles (Theorem \ref{theorem_timeed}). Instead, we use a version of the Arzelà-Ascoli theorem for the compact-open topology, the ergodic theorem and the fact that there exists a dense horocycle in $\Sigma$. Actually, we can prove that all the horocycles of $\Sigma$ are dense.

\begin{lemma}\label{dense_orbit}
	Let $M$ be a rank $1$ nonpositively curved complete connected Riemannian surface with the duality condition. Then every horocycle $H$ contained in $\Sigma$ is dense in $T^1M$.
\end{lemma}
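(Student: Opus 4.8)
The plan is a proof by contradiction that uses only the measure $\mu$, its product structure, and the duality hypothesis. Fix a horocycle $H=H^u(v)$ contained in $\Sigma$, so that $v\in Rec^1$; by Lemma~\ref{lemma1} we have $H=W^u(v)$ and it consists of rank~$1$ vectors. Denote also by $H$ its projection to $T^1M$, and set $F:=\overline{H}$. The first observation is that $F$ is a union of entire unstable horospheres: if $x=\lim x_k$ with $x_k\in H$ for all $k$, then for each $x'\in H^u(x)$ the continuity of the horospherical foliation provides $x_k'\in H^u(x_k)=H$ with $x_k'\to x'$, whence $x'\in F$. Consequently, if $U:=T^1M\setminus F$ were nonempty it would be open and, the horospheres partitioning $T^1M$, also a union of entire unstable horospheres.

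So I would assume $U\neq\emptyset$ and seek a contradiction with $\mu(T^1M)<\infty$. The duality condition gives $\Lambda(\Gamma)=\partial\tilde{M}$, so $\mu$ has full support in $T^1M$, $\mu_{x_0}$ has full support in $\partial\tilde{M}$, and $Rec^1$ has full $\mu$-measure; in particular one can pick $w_0\in U\cap Rec^1$. Around the rank~$1$ vector $w_0$, take, exactly as in the proof of Proposition~\ref{prop_local_ed}, a chart $P\colon B\to A_1\times A_2\times\RR$ with $B$ a small neighbourhood of $w_0$ contained in $U$ (small enough that $B$ embeds into $T^1M$), and cut it down to $B=P^{-1}(A_1\times A_2\times I)$ for a bounded interval $I$; in these coordinates every unstable horosphere meeting $B$ has its center in $A_1$ and its Busemann level in $I$, and conversely each such horosphere meets $B$. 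Identifying $\mathcal H$ with $\partial\tilde{M}\times\RR$, the set $\mathcal B$ of horospheres meeting $B$ therefore contains $A_1\times I$, of positive $\hat\mu$-measure since $(w_0)_-\in\operatorname{supp}\mu_{x_0}$; and by saturation of $U$ every horosphere in $\mathcal B$ lies in $U$. (Here I use that the product formula~(\ref{product_structure}) and the measure $\hat\mu$ descend to $\Gamma\backslash\mathcal H$ by $\Gamma$-invariance of all the data, and that, $B$ being small, $\mathcal B$ meets each $\Gamma$-orbit of horospheres at most once, so it still has positive measure downstairs.)

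It remains to estimate $\mu(U)$. On one hand $\mu(U)\le\mu(T^1M)<\infty$. On the other hand, applying the (descended) product structure to $B$ gives $0<\mu(B)=\int_{\mathcal B}\mu_{H'}(H'\cap B)\,\dd\hat\mu(H')$, so a positive $\hat\mu$-measure subfamily $\mathcal B'\subset\mathcal B$ consists of horospheres $H'$ with $\mu_{H'}(H')>0$. Since $\Sigma$ has full $\mu$-measure and is a union of whole horospheres, the product formula applied to $T^1M\setminus\Sigma$ shows that for $\hat\mu$-almost every horosphere either it is contained in $\Sigma$ or it has zero $\mu_{H'}$-mass; hence almost every $H'\in\mathcal B'$ lies in $\Sigma$. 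Such an $H'$ contains a rank~$1$ recurrent vector, so its center is a radial limit point; since $M$ is an orientable surface, its stabilizer in $\Gamma$ is then trivial — an element fixing the center and preserving $H'$ cannot be axial, for it would change the Busemann level, nor parabolic, for the center is radial, nor a nontrivial elliptic element, for on an orientable surface a finite-order isometry preserving an oriented horosphere fixes a unit vector and is therefore the identity. Thus $H'$ embeds into $T^1M$ and, by Lemma~\ref{properties_horocycles}(iv), $\mu_{H'}(H')=\infty$. Consequently $\mu(U)\ge\int_{\mathcal B'}\mu_{H'}(H'\cap U)\,\dd\hat\mu(H')=\int_{\mathcal B'}\mu_{H'}(H')\,\dd\hat\mu(H')=\infty$, a contradiction. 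Hence $U=\emptyset$ and $H$ is dense in $T^1M$.

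I expect the delicate point to be this last paragraph: one must ensure that the horospheres which would populate a nonempty complement genuinely carry infinite $\mu$-mass once pushed down to $T^1M$, which forces ruling out periodic horospheres and hence using the recurrence (through radiality of the centers) and the orientability, together with a careful descent of $\mu=\int\mu_H\,\dd\hat\mu$ to the quotient. The two soft ingredients — continuity of the horospherical foliation, used for the saturation of $F$, and the full-support statements furnished by duality — are standard.
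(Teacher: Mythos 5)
Your argument breaks down at the very step you flag as delicate: the passage from the product formula on $T^1\tilde M$ to the estimate $\mu(U)\ge\int_{\mathcal B'}\mu_{H'}(H'\cap U)\,\dd\hat\mu(H')=\infty$ on the quotient. The disintegration (\ref{product_structure}) is a statement about the $\Gamma$-invariant (hence infinite) measure on $T^1\tilde M$; to compute the $\mu$-mass of a subset of $T^1M$ you must integrate the leafwise masses of its preimage intersected with a \emph{fundamental domain}, i.e.\ terms of the form $\mu_{H'}(H'\cap\tilde U\cap D)$, which are finite and sum to $\mu(U)\le\mu(T^1M)<\infty$; the full leaf masses $\mu_{H'}(H')$ never enter. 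Your attempted repair --- ``$\hat\mu$ descends to $\Gamma\backslash\mathcal H$'' and ``$\mathcal B$ meets each $\Gamma$-orbit of horospheres at most once'' --- does not work: the $\Gamma$-action on $\mathcal H\simeq\partial\tilde M\times\RR$ is not proper and $\hat\mu$ is an infinite, typically ergodic, invariant measure, so there is no meaningful quotient measure; and a projected horocycle generally crosses the projection of the small box in infinitely many arcs, so infinitely many $\Gamma$-translates of the same horosphere meet the lifted box. A clean sanity check that the inference ``a positive-$\hat\mu$ family of leaves of infinite $\mu_{H'}$-mass lies in $U$, hence $\mu(U)=\infty$'' is fallacious: run your last paragraph verbatim with $U=T^1M$ on a compact hyperbolic surface (every horosphere there is embedded, lies in $\Sigma$, and has infinite conditional mass); it would yield $\mu(T^1M)=\infty$, contradicting finiteness. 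Nothing in your final paragraph uses that $U$ is proper, so the contradiction you derive is not a contradiction at all. The stabilizer/orientability discussion is beside the point (the deck group $\Gamma$ is torsion-free, so ellipticity never occurs), and in any case embeddedness of the leaf does not rescue the estimate.

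Beyond this, note that your route needs hypotheses the lemma does not assume --- finiteness of $\mu$, ergodicity (to have $Rec^1$ of full measure), orientability --- whereas the statement is deliberately soft. The paper's proof is purely topological and short: by Eberlein, under visibility plus the duality condition there exists a dense horocycle in $T^1M$, and a horocycle $H^u(v)$ is dense if and only if $v$ is not almost minimizing; a horocycle contained in $\Sigma$ contains a recurrent vector, which is evidently not almost minimizing, so it is dense. If you want to keep a measure-theoretic flavour, you would still need an input of exactly this dynamical kind (e.g.\ minimality or ergodicity of the horocyclic foliation on the relevant set), which is essentially what is being proved; as written, the measure count alone cannot detect density.
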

\begin{proof}
	It follows directly from two results of Eberlein. A nonpositively curved complete connected manifold that satisfies the visibility axiom and the duality condition, like $M$, has a dense horocycle in $T^1M$ \cite[Theorem 5.2]{Eberlein73}. Next we apply \cite[Theorem 5.5]{Eberlein73} to $M$, which says that a horocycle $H^u(v)$ in $\mathcal{H}$ is dense in $T^1 M$ if and only if $v$ is not almost minimizing. We say that $v$ is \emph{almost minimizing} if there exists a constant $C>0$ such that, for all $t\ge0$, we have $d(\pi(v),\pi(g_t(v)))\ge t -C$. If a horocycle $H$ is contained in $\Sigma$, then there is a recurrent vector in $H$ and, in particular, this vector is not almost minimizing. Thus, the horocycle $H$ is dense in $T^1 M$
\end{proof}

We consider the space of continuous functions $C(\Sigma)$ on the set $\Sigma$ equipped, this time, with the compact-open topology. Recall that, for functions on a metric space, the convergence in the compact-open topology is equivalent to the uniform convergence on compact subsets. The next fact will be used in the alternative proof of Proposition \ref{uniform_convergence}. 

\begin{lemma}\label{coimpliesl2}
	If a uniformly bounded sequence of functions $f_n$ in $C(\Sigma)$ converges in the compact-open topology to a function $f\in C(\Sigma)$, then the sequence also converges to $f$ in the $L^2(\Sigma,\mu|_\Sigma) $-norm.
\end{lemma}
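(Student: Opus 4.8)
The plan is to run the standard argument that compact-open convergence of a uniformly bounded sequence against a \emph{finite} measure upgrades to $L^2$ convergence, once one has a compact exhaustion of $\Sigma$ in $\mu|_\Sigma$-measure. The two ingredients are: (1) the Bowen-Margulis measure $\mu$ is finite and $\Sigma$ has full $\mu$-measure, so $\mu|_\Sigma$ is a finite Borel measure with $\mu|_\Sigma(\Sigma)=\mu(T^1M)$; and (2) since $T^1M$ is a locally compact, $\sigma$-compact, second countable metric space, $\mu$ is a Radon measure, hence inner regular. Combining these, for every $\varepsilon>0$ there is a compact set $K\subset \Sigma$ with $\mu(\Sigma\setminus K)<\varepsilon$.

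First I would fix a uniform bound $C:=\sup_n\norm{f_n}_\infty<\infty$; since $f_n\to f$ pointwise, also $\norm{f}_\infty\le C$, and therefore $\modul{f_n-f}\le 2C$ everywhere. Next, given $\varepsilon>0$, choose a compact $K\subset\Sigma$ as above and split
\begin{equation*}
	\int_\Sigma \modul{f_n-f}^2\,\dd\mu = \int_K \modul{f_n-f}^2\,\dd\mu + \int_{\Sigma\setminus K}\modul{f_n-f}^2\,\dd\mu.
\end{equation*}
The second term is bounded by $(2C)^2\mu(\Sigma\setminus K)<4C^2\varepsilon$, uniformly in $n$. For the first term, convergence in the compact-open topology gives $\sup_{K}\modul{f_n-f}\to 0$ as $n\to\infty$, so
\begin{equation*}
	\int_K \modul{f_n-f}^2\,\dd\mu \le \mu(K)\,\Bigl(\sup_K\modul{f_n-f}\Bigr)^2 \le \mu(T^1M)\,\Bigl(\sup_K\modul{f_n-f}\Bigr)^2 \xrightarrow[n\to\infty]{} 0 .
\end{equation*}
Hence $\limsup_{n\to\infty}\norm{f_n-f}_{L^2(\Sigma,\mu|_\Sigma)}^2\le 4C^2\varepsilon$, and letting $\varepsilon\to 0$ yields $\norm{f_n-f}_{L^2(\Sigma,\mu|_\Sigma)}\to 0$.

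There is no serious obstacle here: the only point that deserves a sentence of justification is the existence of the compact exhaustion, i.e. the inner regularity of the finite Borel measure $\mu$ on $T^1M$ together with the fact that $\mu$ is carried by $\Sigma$, which allows the approximating compacta to be taken inside $\Sigma$. Everything else is a two-line split-and-dominate estimate using the finiteness of $\mu$ and the uniform bound $C$.
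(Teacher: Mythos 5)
Your estimate is exactly the one in the paper: pick a compact set $K$ of almost full measure inside $\Sigma$, bound the integral off $K$ by $4C^2\varepsilon$ using the uniform bound, and kill the integral over $K$ by uniform convergence there. The only place where you diverge from the paper, and where your write-up is too quick, is the production of $K$. You invoke inner regularity of the finite Borel measure $\mu$ applied to $\Sigma$ itself; but inner regularity is a statement about Borel sets, and it is not established (nor obvious) that $\Sigma$ is Borel in $T^1M$: $\Sigma$ is the saturation of the set of rank $1$ recurrent vectors under the horocyclic foliation, and such saturations need not be Borel. What the paper knows is only that $\Sigma$ \emph{contains} the Borel set $Rec^1$ of rank $1$ recurrent vectors, which has full $\mu$-measure, and it takes the compacta inside $Rec^1$: since $Rec^1$ is a $G_\delta$ subset of the Polish space $T^1M$, it is itself Polish, so the finite Borel measure $\mu|_{Rec^1}$ is Radon and one finds compact $K\subset Rec^1\subset\Sigma$ with $\mu(Rec^1\setminus K)<\varepsilon$; since $Rec^1$ has full measure, this gives the same complement bound you need. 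Your proof is repaired verbatim by this substitution (replace ``compact $K\subset\Sigma$ with $\mu(\Sigma\setminus K)<\varepsilon$'' by ``compact $K\subset Rec^1$ with small complement''), and the remaining split-and-dominate argument, including the bound $\modul{f_n-f}\le 2C$ and the use of $\sup_K\modul{f_n-f}\to 0$, is correct and identical in substance to the paper's.
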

\begin{proof}
	Recall that a Polish space is a separable completely metrizable topological space. The set of rank $1$ recurrent vectors $Rec^1\subset\Sigma$ is a $G_\delta $ subset of $T^1 M$. Then $Rec^1$ is a Polish space, because $G_\delta$ subsets of Polish spaces are Polish spaces and $T^1 M$ is Polish.
	
	Finite Borel measures on a Polish space are Radon. For $Rec^1 $ and the restriction of $\mu$ to $Rec^1$, this means that, given any number $\varepsilon>0$, there is a compact set $K\subset Rec^1$ such that $\mu(Rec^1 \setminus K)<\varepsilon$. In addition, the set $Rec^1 $ is of full measure in $T^1 M$. We have 
	
	\begin{equation*}
		\int_\Sigma \modul{f_k-f }^2 \,\dd \mu \le 4 C^2 \varepsilon   + \int_K \modul{f_k-f }^2 \,\dd \mu,
	\end{equation*}
	where $C$ is a bound of $f$ and the sequence $\{f_n\}$. The last term tends to $0$ because of the uniform convergence of $f_n$ to $f$ on compact subsets.
\end{proof}

Let $f$ be a bounded and uniformly continuous function on $T^1M$. Applying the Arzelà-Ascoli theorem for the compact-open topology \cite[Theorem XII.6.4]{Dugundji66}, since the family of functions $\{\bar{M}_1(f\circ g_t)\}_{t>0}$ is equicontinuous and uniformly bounded, we obtain that it has a compact closure in $C(\Sigma)$ with the compact-open topology. To complete the proof of Proposition \ref{uniform_convergence}, we show that the only accumulation point of $\{\bar{M}_1(f\circ g_t)\}_{t>0}$ in the compact-open topology is the constant function $\int f\dd\mu/\mu(T^1M)$.

Let $\varphi$ in $C(\Sigma)$ be the limit of a sequence $\bar{M}_1(f\circ g_{t_k})$ in the compact-open topology, where $t_k\to +\infty$. By Lemma \ref{coimpliesl2}, $\varphi$ is the limit in $L^2(\Sigma,\mu|_\Sigma)$ of the same sequence. On the other hand, we apply the $L^2$ ergodic theorem for the system $(\Sigma,h_s,\mu|_\Sigma)$ to the function $f\in L^2(\Sigma,\mu|_\Sigma)$. It says that $\bar{M}_t(f)$ converges to an $h_s$-invariant function $\bar{f}$ in the $L^2$ norm, with the equality $\int \bar{f}\dd\mu=\int f\dd\mu$. Thanks to $g_t$ invariance of $\mu$, we have the inequality

\begin{equation*}
	\norm{\varphi - \bar{f}\circ g_{t_k}}_2\le
	\norm{\varphi - \bar{M}_1(f\circ g_{t_k})}_2 + \norm{\bar{f} - \bar{M}_{e^{t_k}}(f)}_2,
\end{equation*}
which implies the $L^2$-convergence of $\bar{f}\circ g_{t_k}$ to $\varphi$, because both terms on the right side tend to zero. The function $\bar{f}$ is $h_s$-invariant, so $\bar{f}\circ g_{t_k}$ are also invariant because the geodesic and the horocyclic flow commute. Then their limit, the continuous function $\varphi$ of $\Sigma$, is also invariant under $h_s$.

In brief, the function $\varphi$ is constant on the orbits of $h_s$, and these orbits are dense by Lemma \ref{dense_orbit}. Since $\varphi $ is continuous, we conclude that it is constant on $\Sigma$. In fact, the value of the constant is $\int f\dd\mu/\mu(T^1M)$, because we have

\begin{equation*}
	\int \varphi \, \dd \mu = \int \bar{f} \circ g_{t_k} \, \dd \mu =\int \bar{f} \,\dd \mu = \int f\, \dd \mu.
\end{equation*}

\subsection{Final remark}

Theorem \ref{unique_ergodi} does not solve completely the problem of the horocyclic flow in nonpositive curvature, since it does not say what happens to the flow outside the set $\Sigma$. For instance, we wonder if a horocyclic flow defined everywhere on a compact nonpositively curved surface is uniquely ergodic. We will study this question for the class of compact manifolds without flat strips in a forthcoming article.

\bibliographystyle{plain}
\bibliography{general_library}
\end{document}